\def\l@subsection{\@tocline{2}{0pt}{2.5pc}{2.5pc}{}}
\def\chapter{\clearpage\thispagestyle{plain}\global\@topnum\z@ 
\@afterindenttrue \secdef\@chapter\@schapter}
\newtheorem{thmgl} {Theorem}    
\newtheorem{lemgl} {Lemma}
\newtheorem{thmnn}{Theorem}                
\newtheorem{propnn}{Proposition}
\newtheorem{cornn}{Corollary}
\theoremstyle{definition}
\newtheorem{remsnn}{Remarks}
\newcommand{\mf}{\mathfrak}
\newcommand{\mc}{\mathcal}
\newcommand{\mb}{\mathbb}
\newcommand{\nts}{\negthinspace}     
\newcommand{\dt}{{\scriptscriptstyle\bullet}} 
\newcommand{\sm}{\setminus}         
\newcommand{\ot}{\otimes}           
\newcommand{\Hom}{{\rm Hom}}        
\newcommand{\End}{{\rm End}}
\newcommand{\Ext}{{\rm Ext}}
\newcommand{\Tor}{{\rm Tor}}
\newcommand{\Maxspec}{{\rm Maxspec}}
\newcommand{\rank}{{\rm rank}}
\newcommand{\gr}{{\rm gr}}   
\newcommand{\g}{\mf{g}}
\let\ttie\t
\newcommand{\tie}[1]{{\let\t\ttie \ttie#1}}
\renewcommand{\t}{\mf{t}}  
\renewcommand{\u}{\mf{u}}
\renewcommand{\b}{\mf{b}}
\newcommand{\ad}{{\rm ad}}               
\newcommand{\gl}{\mf{gl}}
\newcommand{\spl}{\mf{sl}}
\newcommand{\Ad}{{\rm Ad}}              
\newcommand{\Lie}{{\rm Lie}}
\newcommand{\GL}{{\rm GL}}
\newcommand{\SL}{{\rm SL}}
\begin{document}
\title{The Zassenhaus variety of a reductive Lie algebra in positive characteristic}


\begin{abstract}
Let $\g$ be the Lie algebra of a connected reductive group $G$ over an algebraically closed field $k$ of characteristic $p>0$. Let $Z$ be the centre of the universal enveloping algebra $U=U(\g)$ of $\g$. Its maximal spectrum is called the Zassenhaus variety of $\g$. We show that, under certain mild assumptions on $G$, the field of fractions ${\rm Frac}(Z)$ of $Z$ is $G$-equivariantly isomorphic to the function field of the dual space $\g^*$ with twisted $G$-action. In particular ${\rm Frac}(Z)$ is rational. This confirms a conjecture J.~Alev. Furthermore we show that $Z$ is a unique factorisation domain, confirming a conjecture of A.~Braun and C.~Hajarnavis. Recently, A.~Premet used the above result about ${\rm Frac}(Z)$, a result of Colliot-Thelene, Kunyavskii, Popov and Reichstein and reduction mod $p$ arguments to show that the Gelfand-Kirillov conjecture cannot hold for simple complex Lie algebras that are not of type $A$, $C$ or $G_2$.
\end{abstract}

\author[R.\ H.\ Tange]{Rudolf Tange}

\keywords{Zassenhaus variety, universal enveloping algebra, rational variety, unique factorisation domain}
\thanks{2000 {\it Mathematics Subject Classification}. 16S30, 17B45, 17B50}

\maketitle

\pagestyle{myheadings}
\markright{\uppercase{The Zassenhaus variety of a reductive Lie algebra}}

\section*{Introduction}
Throughout $k$ is an algebraically closed field of characteristic $p>0$ and $G$ is a connected reductive group over $k$. We denote the Lie algebra of $G$ by $\g$, the universal enveloping algebra of $\g$ by $U(\g)$ or just $U$ and the centre of $U$ by $Z$. Since $U$ has no zerodivisors, $Z$ is a domain. The adjoint actions of $G$ and $\g$ on $\g$ induce actions of $G$ and $\g$ on $U$ by automorphisms and derivations respectively. The same applies to the symmetric algebra $S(\g)$ on $\g$. The action of $\g$ on $U$ is given by $x\cdot u=xu-ux$ for $x\in\g$ and $u\in U$. It follows that $Z=U^\g:=\{u\in U\,|\,x\cdot u=0\text{\ for all\ }x\in\g\}$.

As in characteristic zero we have, under certain mild conditions on $G$, that the invariant algebra $U^G:=\{u\in U\,|\,g\cdot u=u\text{\ for all\ }g\in G\}$ is a polynomial algebra in $\rank(G)$ variables. However, unlike in characteristic $0$, we have that the inclusion $U^G\subseteq U^\g=Z$ is proper. In fact $Z$ has Krull dimension $\dim(G)=\dim(\g)$ and $U$ is a finite module over $Z$. As a consequence all irreducible $\g$-modules are finite dimensional with an upper bound only depending on $\g$ for their dimension. The maximal spectrum of $Z$ is called the {\it Zassenhaus variety} of $\g$ and we denote it by $\mc Z$. It is a normal irreducible variety, but it is not smooth. Denote the set of isomorphism classes if irreducible $\g$-modules by ${\rm Irr}(\g)$. Then Schur's Lemma gives us a map $\varphi:{\rm Irr}(\g)\to \mc Z$. This map is surjective with finite fibres. Zassenhaus \cite{Z} showed, in a much more general setting, that the points of $\mc Z$ of which the fibre contains an irreducible module of maximal dimension form a special open set $\mc O$ of $\mc Z$ and that the fibres of these points are in fact singletons. Brown and Goodearl \cite{BG} proved that $\mc O$ coincides with the smooth locus of $\mc Z$. A result of M\"uller \cite{Mu} implies that the fibers of $\varphi$ are the blocks of $U$, see also \cite{BG2}. From this and Veldkamp's description of the centre one deduces Humphreys' conjecture on the block structure of $U$, see \cite{BrGo}.

The strong link with the representation theory of $\g$ as well as the fact that $U$ is an important example in ring theory are important motivations for the study of the Zassenhaus variety.

In this paper we want to consider the following two conjectures concerning the centre $Z$:
\begin{enumerate}[(A)]
\item The field of fractions ${\rm Frac}(Z)$ of $Z$ is rational, that is, purely transcendental over $k$.
\item The centre $Z$ is a unique factorisation domain.
\end{enumerate}
Conjecture (A) was posed by J.~Alev and conjecture (B) was posed by A.~Braun and C.~Hajarnavis. Since ${\rm Frac}(Z)$ is the function field of $\mc Z$, conjecture (A) states that $\mc Z$ is a rational variety. It can be considered as a commutative version of the Gelfand-Kirillov conjecture which states that ${\rm Frac}(U)$ is isomorphic to the field of fractions of a Weyl algebra $D_n(L)$, where $L$ is a purely transcendental extension of $k$. In this paper we will prove both conjectures under certain mild hypotheses on $G$. These conjectures were proved in type $A$ by A.~Premet and the author. The proof for the general case given in the present paper is quite different, since the arguments for type $A$ don't work in general.

The paper is organised as follows. In Section~\ref{s.prelim} we recall some basic results from the literature that will be needed in the next two sections. This section is rather long, since our approach requires a detailed understanding of these results. Subsection~\ref{ss.standardhyp} contains the hypotheses that we make on $G$. In Section~\ref{s.rationality} we prove conjecture (A). From the Galois covering with group $W$ of the variety $\g^*_{\rm rss}$ of regular semisimple elements of $\g^*$  (see Subsection~\ref{ss.regsselts}) we construct a Galois covering with group $W$ of a special open subset of $\mc Z$. This covering turns out to be isomorphic to a special open subset of the covering of $\g^*_{\rm rss}$. The idea to work with the Galois covering of $\g^*_{\rm rss}$ was  suggested by the description of $\mc Z$ at the beginning of \cite[Sect.~3]{Kry}. There it is made clear that ``the diagonal form" of the semisimple part of the $p$-character is in a natural way incorporated in the equations of the Zassenhaus variety. In Section~\ref{s.ufd} we prove conjecture (B). The proof relies strongly on that of conjecture (A). We obtain as a corollary that every height one prime of $U$ is generated by a single central element and, combining with Braun's work \cite{Braun1}, that $U$ is a Calabi-Yau algebra over every polynomial subalgebra of $Z$ over which $Z$ is module finite.

\section{Preliminaries}\label{s.prelim}
Let $\mb F_p$ be the prime subfield of $k$. Fix an ${\mb F}_p$-structure on $G$ such that $G$ is split over ${\mb F}_p$ and let $T$ be a maximal torus which is defined and split over ${\mb F}_p$, see \cite[AG.11,V.18]{Bo}. We denote the Lie algebra of $T$ by $\t$, the character group of $T$ by $X(T)$ and the Weyl group of $G$ relative to $T$ by $W$. Let $\Sigma\subseteq X(T)$ be the set of roots of $G$ relative to $T$. To each root $\alpha\in\Sigma$ there is associated a coroot $\alpha^\vee: k^\times\to G$, see \cite[Ch.~7]{Spr}. We put $h_\alpha=d\alpha^\vee(1)\in\t^*$, where $d\alpha^\vee:k\to\t$ is the differential of $\alpha^\vee$. We fix a system of positive roots $\{\alpha\in\Sigma\,|\,\alpha>0\}$; this also means that we have chosen a set of simple roots.

The Lie algebra $\g$ is a restricted Lie algebra, we denote its $p$-mapping by $x\mapsto x^{[p]}:\g\to\g$. For any Lie algebra of an algebraic group and in fact for any restricted Lie algebra one can define the notions semisimple and nilpotent for the elements of the Lie algebra; see \cite{Bo} or \cite{Str-Far}. The Lie algebra $\t$ of $T$ is a restricted subalgebra of $\g$. It consists of semisimple elements and its $p$-mapping is $p$-semilinear, since $\t$ is abelian. The $\mb F_p$-structure on $G$ induces one on $\g$: $\g=k\ot_{\mb F_p}\g(\mb F_p)$. The adjoint action of $G$ on $\g$ and the $p$-mapping of $\g$ are defined over $\mb F_p$ and $\t$ is an $\mb F_p$-defined restricted subalgebra of $\g$. An element $x$ of $\g$ is called {\it toral} if $x^{[p]}=x$. The toral elements of $\t$ are the elements of $\t(\mb F_p)$.

For a variety $V$ we denote the algebra of (everywhere defined) regular functions on $V$ by $k[V]$. If $V$ is a finite dimensional vector space over $k$, then we have a canonical isomorphism $k[V]\cong S(V^*)$, where $S(V^*)$ denotes the symmetric algebra on $V^*$. Throughout this paper we will identify $S(\g)$ and $k[\g^*]$ and also $S(\t)$ and $k[\t^*]$ by means of this isomorphism.

In Subsections~\ref{ss.filtration}-\ref{ss.veldkamp} below we follow \cite[Sect.~2,3]{PrT} which is based on the ideas in \cite{Ve}.

\subsection{The Frobenius twist and the $p$-centre}\label{ss.pcentre}
For more precise and general definitions concerning the Frobenius twist we refer to \cite[I.9.1,2,10; II.3.16]{Jan1}.
For a vector space $V$ over $k$ the {\it Frobenius twist} $V^{(1)}$ of $V$ is defined as the vector space over $k$ with the same additive group as $V$ and with scalar multiplication given by $a\cdot x=a^{1/p}\,x$. Note that the linear functionals and the polynomial functions on $V^{(1)}$ are the $p$-th powers of those of $V$. In fact we have an isomorphism $V^{*(1)}\stackrel{\sim}{\to}V^{(1)*}$ of vector spaces given by $f\mapsto f^p$. The identity map $V\to V^{(1)}$ is a morphism of varieties, we call it the {\it Frobenius morphism}. If $V$ has an $\mb F_p$-structure $V\cong k\ot_{\mb F_p}V(\mb F_p)$, then we obtain an isomorphism $V^{(1)}\cong V$ which is given by the identity on $V(\mb F_p)$. The Frobenius morphism yields then a Frobenius endomorphism of $V$. The Frobenius endomorphism raises the coordinates with respect to an $\mb F_p$ basis of $V(\mb F_p)$ to the $p$-th power. The Frobenius twist of a $k$-algebra is defined similarly (only the scalar multiplication is modified). If $V$ is a finite dimensional vector space over $k$, then we have an isomorphism $\End_k(V^{(1)})\cong\End_k(V)^{(1)}$ of algebras over $k$ which is given by the identity map.

More generally, one can define the Frobenius twist $X^{(1)}$ of a variety $X$ over $k$ and the Frobenius morphism $Fr:X\to X^{(1)}$. If $X$ has an $\mb F_p$-structure, then we obtain an isomorphism $X^{(1)}\cong X$ and a Frobenius endomorphism of $X$. If $X$ is a closed subvariety of affine space $\mb A^n(k)=k^n$, then $X^{(1)}$ can be identified with $Fr(X)$, where $Fr$ is the Frobenius endomorphism of $\mb A^n(k)$; if $X$ is $\mb F_p$-defined, then its Frobenius endomorphism is induced by that of $\mb A^n(k)$. If $G'$ is a linear algebraic group, then so is $G'^{(1)}$ and the Frobenius morphism is a morphism of algebraic groups. If $G'$ has an $\mb F_p$-structure, then the Frobenius endomorphism is an endomorphism of algebraic groups. If $V$ is a finite dimensional vector space over $k$, then we have an isomorphism $\GL(V^{(1)})\cong\GL(V)^{(1)}$ of algebraic groups.

If $V$ is a finite dimensional rational $G$-module, then we can turn $V^{(1)}$ into a $G$-module by composing $G\to\GL(V)$ with the Frobenius morphism $\GL(V)\to\GL(V^{(1)})$. Note that, when passing from the $G$-module $V$ to the $G$-module $V^{(1)}$, the weights of $T$ get multiplied by $p$. We do have $\t\cong\t^{(1)}$ as $W$-modules, since $\t(\mb F_p)\subseteq\t$ is $W$-stable. The above construction extends to infinite dimensional rational $G$-modules. We can also compose $G\to\GL(V)$ with the Frobenius endomorphism of $G$; we denote the resulting $G$-module by $V^{[1]}$. If $V$ has an $\mb F_p$-structure and the representation is defined over $\mb F_p$, then we get an isomorphism $V^{[1]}\cong V^{(1)}$.

The {\it $p$-centre} $Z_p$ of $U$ is defined as the subalgebra of $U$ generated by all elements $x^p-x^{[p]}$ with $x\in \g$. It is well-known (and easily seen) that $Z_p\subseteq Z$ is a polynomial algebra in $x_i^p-x_i^{[p]}$ where $\{x_i\}$ is any basis of $\g$. Following \cite{Kry} we define $\eta:S(\g)^{(1)}\to Z_p$ by setting $\eta(x)=x^p-x^{[p]}$ for all $x\in\g$. This is a $G$-equivariant algebra isomorphism. Considering $\eta$ as a homomorphism from $S(\g)^{(1)}$ to $U$, we have that $\eta(S(\t)^{(1)})\subseteq U(\t)=S(\t)$.

\subsection{The standard hypotheses for reductive groups}\label{ss.standardhyp}
Throughout this paper we assume that $G$ satisfies the following standard hypotheses (see \cite[6.3]{Jan2}).
\begin{enumerate}[(H1)]
\item The derived group $DG$ of $G$ is simply connected.
\item $p$ is good for $G$.
\item There exists a $G$-invariant non-degenerate bilinear form on $\g$.
\end{enumerate}

Recall that a prime is called {\it good} for $G$ if it is good for the root system $\Sigma$ and that a prime is called good for a root system if it is good for each irreducible component. The bad (i.e. not good) primes for the irreducible root systems are as follows: none for type $A_n$; $2$ for types $B_n$, $C_n$, $D_n$; $2$ and $3$ for types $E_6$, $E_7$, $F_4$ and $G_2$; $2$, $3$ and $5$ for type $E_8$.

Hypothesis (H3) says that $\g\cong\g^*$ as $G$-modules. Standard arguments show that a nondegenerate $G$-invariant bilinear form on $\g$ restricts to a nondegenerate $W$-invariant bilinear form on $\t$. So we also have $\t\cong\t^*$ as $W$-modules. In this paper we will not use these isomorphisms to replace $\g^*$ by $\g$ and $\t^*$ by $\t$, because we want to keep all (iso)morphisms as natural as possible.

Hypothesis (H1) implies that the $h_\alpha$ with $\alpha$ simple are linearly independent. Furthermore, it implies that $kh_\alpha=[\u_{-\alpha},\u_\alpha]\ne0$ for all $\alpha\in\Sigma$. Using (H3) we deduce that the $G$-module isomorphism $\g\cong\g^*$ given by the nondegenerate $G$-invariant bilinear form on $\g$ maps $h_\alpha$ to a nonzero multiple of $d\alpha$, see \cite[11.2]{Jan2}. So the elements $d\alpha$ with $\alpha$ simple are also linearly independent.

Note that $\GL_n$ always satisfies the hypotheses (H1)-(H3), but that $\SL_n$ only satisfies them when $p\nmid n$.

\subsection{Regular semisimple elements and the restriction theorem for $\g^*$.}\label{ss.regsselts}
We recall from \cite[Sect.~3]{KW} the notions of semisimple and nilpotent elements and the Jordan decomposition for elements of $\g^*$. Let $\u_\alpha$ be the root space corresponding to the root $\alpha$, let $\u^+$ and $\u$ be the sum of the positive and negative root spaces respectively and put $\b=\u+\t$. Embed $\t^*$, $\u^*$ and $\b^*$ in $\g^*$ by requiring the functionals to be zero on $\u+\u^+$, $\t+\u^+$ and $\u^+$ respectively. Then $\b^*=\t^*\oplus\u^*$. An element $\chi\in\g^*$ is called {\it semisimple} resp. {\it nilpotent} if some $G$-conjugate of $\chi$ lies in $\t^*$ resp. $\u^*$. The union of the $G$-conjugates of $\b^*$ is $\g^*$. Now $\chi=\chi_s+\chi_n$ is called a {\it Jordan decomposition} of $\chi$ if there exists a $g\in G$ such that
\begin{align*}
&g\cdot\chi_s\in\t^*, g\cdot\chi_n\in\u^*\\
&\text{and for any positive root }\alpha,\\
&g\cdot\chi_s(h_\alpha)\ne 0\text{ implies }g\cdot\chi_n(\u_\alpha)=0.
\end{align*}
Every element of $\g^*$ has a unique Jordan decomposition. Let $B^+$ be the Borel subgroup containing $T$ with $\Lie(B^+)=\t+\u^+$. Then $\b^*$ and $\u^*$ are $B^+$-stable. Furthermore, if $\chi\in\b^*$, then all $B^+$-conjugates of $\chi$ have the same $\t^*$-part. Now let $\chi\in\b^*$. Then there exists a $b\in B^+$ such that $b\cdot\chi_s$ and $b\cdot\chi_n$ have the displayed properties, in particular, $\chi_s\in\b^*$ and $\chi_n\in\u^*$; see \cite[3.8]{KW}.

An element $\chi\in\t^*$ is called {\it regular} if $\chi(h_\alpha)\ne0$ for all $\alpha$. We denote set the regular elements of $\t^*$ by $\t^*_{\rm reg}$. The union of the $G$-conjugates of $\t^*_{\rm reg}$ in $\g^*$ is the set $\g^*_{\rm rss}$ of regular semisimple elements of $\g^*$, it is an open dense subset of $\g^*$. Note that for $\chi\in\g^*$, $\chi_s$ regular semisimple implies $\chi=\chi_s$. Let $\Phi:S(\g)=k[\g^*]\to k[\t^*]=S(\t)$ be the homomorphism that restricts functions to $\t^*$. If we identify $S(\g)$ with $S(\u)\ot S(\t)\ot S(\u^+)$, then $\Phi(x\ot h\ot y)=x^0hy^0$ where $f^0$ denotes the zero degree part of $f\in S(\g)$. The homomorphism $\Phi$ restricts to an isomorphism
$$\Phi:S(\g)^G\stackrel{\sim}{\to}S(\t)^W;$$
see \cite[Thm.~4]{KW}. All that is needed for the above results is that $h_\alpha\ne0$ for all $\alpha\in\Sigma$.
Clearly this is implied by (H1). One can, of course, also define semisimple and nilpotent elements and the Jordan decomposition in $\g^*$ using the $G$-module isomorphism $\g\cong\g^*$ given by (H3) and the corresponding classical notions for $\g$, see \cite{Jan2}.

The above results and definitions for semisimple and nilpotent elements also apply to $\g^{*(1)}$, since the underlying additive group of $\g^{*(1)}$ and the $G$-action on it is the same as that of $\g^*$. Using the $G$-module isomorphism $\g^{*(1)}\cong\g^{(1)*}$, this also applies to $\g^{(1)*}$. Note that $\Phi$ induces isomorphisms $S(\g)^{(1)G}\stackrel{\sim}{\to}S(\t)^{(1)W}$ and $(S(\g)^p)^G\stackrel{\sim}{\to}(S(\t)^p)^W$, so we get restriction theorems for $\g^{(1)*}$ and for $\g^{*(1)}$.

We recall the description of the variety of regular semisimple elements of $\g$ in \cite[II.3.17' proof]{Spr-St} (see also \cite[Sect.~2]{D}). Since we need this result for $\g^*$, we only give the version for $\g^*$. We define the action of $W$ on $G/T\times\t^*_{\rm reg}$ by $w(gT,\lambda)=(gw^{-1}T,w(\lambda))$. Note that this action commutes with the action of $G$ that comes from the left multiplication on the first factor.

We refer to \cite{St} or \cite{Dem} for the definition of torsion primes. Since $DG$ is simply connected, the torsion primes are those of $\Sigma$, see e.g. \cite[Cor~2.6]{St}. Since $p$ is good for $\Sigma$ by (H1) it is not a torsion prime of $\Sigma$ and therefore also not a torsion prime of $G$. By \cite[Lem.~3.7, Thm.~3.14]{St} we now have that the stabiliser in $W$ of an element of $\t$ is a reflection subgroup of $W$. By (H3) the same holds for the elements of $\t^*$. In particular, the elements of $\t^*_{\rm reg}$ have trivial stabiliser in $W$.

We have a $G$-equivariant surjective morphism $G/T\times\t^*_{\rm reg}\to\g^*_{\rm rss}$ given by $(gT,\lambda)\mapsto g\cdot\lambda$. By the above the fibres of this morphism are the $W$-orbits. Furthermore, one easily checks that its differentials are surjective.
So, by \cite[Prop.~II.6.6, Thm.~AG.17.3]{Bo}, we obtain a $G$-equivariant isomorphism
$$(G/T\times\t^*_{\rm reg})/W\stackrel{\sim}{\to}\g^*_{\rm rss}\ .$$

\subsection{Filtrations}\label{ss.filtration}
Let $A$ be an associative ring with an
ascending filtration $(A_i)_{i\in\mb{Z}}$. If $I$ is a two sided ideal of $A$, then the abelian group $I$ and the ring $A/I$ inherit an ascending filtration from A and we have an embedding $\gr(I)\hookrightarrow\gr(A)$ of graded abelian groups. If we identify $\gr(I)$ with a graded subgroup of the graded additive group $\gr(A)$ by means of this embedding, then $\gr(I)$ is a two sided ideal of $\gr(A)$ and there is an isomorphism $\gr(A/I)\cong \gr(A)/\gr(I)$; see \cite{Bou2}, Chapter~3, \S~2.4. If $B$ is a subring of $A$, then $B$ inherits a filtration from $A$ and we have a canonical embedding $\gr(B)\hookrightarrow\gr(A)$.

Now assume that $\bigcup_iA_i=A$ and $A_i=\{0\}$ for $i<0$. For a nonzero $x\in A$ we define $\deg(x):=\min\{i\in\mb{N}\,|\,\,x\in A_i\}$ and $\gr(x):=x+A_{k-1}\in\gr(A)^k=A_k/A_{k-1}$ where $k=\deg(x)$. If $\gr(A)$ has no zero divisors, then the same holds for $A$ and we have for $x,y\in A\sm\{0\}$ that $\deg(xy)=\deg(x)+\deg(y)$ and $\gr(xy)=\gr(x)\gr(y)$. Now assume that $A$ is commutative and $\gr(A)$ is a domain, then $\gr((x))=(\gr(x))$ for all $x\in A\sm\{0\}$, where $(x)$ denotes the ideal of $A$ generated by $x$. Recall that $x$ of $A$ is called {\it prime} if $(x)$ is a prime ideal of $A$. It follows that $x\in A$ is prime if $\gr(x)\in\gr(A)$ is prime.

The universal enveloping algebra $U$ has a canonical $G$-stable filtration $k=U_0\subseteq U_1\subseteq U_2 \cdots$ and we have a canonical $G$-equivariant homomorphism $\gr(U)\to S(\g)$ of graded algebras which is an isomorphism by the PBW-theorem.
\begin{propnn}[{cf. \cite[1.2,1.3]{FrPa1}}]
There exists a $G$-equivariant, filtration preserving isomorphism
$U\stackrel{\sim}{\to}S(\g)$ of coalgebras such that the induced isomorphism $\gr(U)\stackrel{\sim}{\to}S(\g)$ of graded $G$-modules is the canonical one.
\end{propnn}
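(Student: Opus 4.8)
The plan is to construct $\varphi$ by lifting the canonical isomorphism $\gr(U)\xrightarrow{\sim}S(\g)$ through the two filtrations, one filtration degree at a time, and then passing to a limit.

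The first point is that each $U_n$ is a finite-dimensional $G$-stable subcoalgebra of $U$: it is $G$-stable because the filtration is, and $\Delta(U_n)\subseteq\sum_{i+j=n}U_i\ot U_j\subseteq U_n\ot U_n$ because $\Delta(x)=x\ot 1+1\ot x$ for $x\in\g$. The same holds for $S_n:=\bigoplus_{i\le n}S^i(\g)\subseteq S(\g)$, and $U=\varinjlim_n U_n$, $S(\g)=\varinjlim_n S_n$ as $G$-coalgebras. So it suffices to produce a compatible family of $G$-equivariant coalgebra isomorphisms $\varphi_n\colon U_n\xrightarrow{\sim}S_n$ (meaning $\Delta_S\varphi_n=(\varphi_n\ot\varphi_n)\Delta_U$ on $U_n$, which is meaningful since $\Delta_U(U_n)\subseteq U_n\ot U_n$) with $\varphi_n|_{U_{n-1}}=\varphi_{n-1}$ and $\gr(\varphi_n)$ equal to the canonical isomorphism; the limit of the $\varphi_n$ is then $\varphi$.

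The $\varphi_n$ are built by induction on $n$, the cases $n\le 1$ being clear (use the $G$-stable decomposition $U_1=k\cdot 1\oplus\g$). For the inductive step it is convenient to dualise. Since $U_n$ and $S_n$ are finite dimensional, and since $U$ is cocommutative and connected, $U_n^*$ is a finite-dimensional commutative local $G$-algebra with residue field $k$, and giving $\varphi_n$ amounts to giving a $G$-equivariant $k$-algebra homomorphism $\psi_n\colon S_n^*\to U_n^*$ lifting $\psi_{n-1}:=\varphi_{n-1}^*$ along the natural surjections $S_n^*\twoheadrightarrow S_{n-1}^*$ and $U_n^*\twoheadrightarrow U_{n-1}^*$ (dual to the inclusions $S_{n-1}\hookrightarrow S_n$, $U_{n-1}\hookrightarrow U_n$). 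Here $S_n^*=S(\g^*)/\mf n^{\,n+1}$ with $\mf n=S(\g^*)^+$, generated as a $k$-algebra by $\g^*=S^1(\g^*)$; the maximal ideal $\mf m$ of $U_n^*$ satisfies $\mf m^{\,n+1}=0$; and the kernel $N$ of $U_n^*\twoheadrightarrow U_{n-1}^*$ is a square-zero ideal of $U_n^*$ which is annihilated by $\mf m$ and is $G$-isomorphic to $S^n(\g^*)$. Consequently a lift $\psi_n$ exists as soon as the $G$-equivariant linear map $\psi_{n-1}|_{\g^*}\colon\g^*\to\mf m/N$ can be lifted to a $G$-equivariant linear map $\g^*\to\mf m$: any such lift extends (freely, as $U_n^*$ is commutative) to a $G$-equivariant algebra homomorphism $S_n^*\to U_n^*$, since the relations of $S_n^*$ are respected because $\mf m^{\,n+1}=0$; a five-lemma argument (or a dimension count) shows this homomorphism is an isomorphism; and the induction hypothesis shows it induces the canonical map in degree $1$ of the associated graded, hence in every degree. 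Finally, lifting $\psi_{n-1}|_{\g^*}$ $G$-equivariantly along the surjection $\mf m\twoheadrightarrow\mf m/N$ of rational $G$-modules is possible exactly when the pullback extension
$$0\longrightarrow S^n(\g^*)\longrightarrow E\longrightarrow\g^*\longrightarrow 0$$
of rational $G$-modules splits, i.e.\ when a certain class in $\Ext^1_G(\g^*,S^n(\g^*))$ vanishes.

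The vanishing of this last obstruction is the heart of the matter, and is precisely what \cite[1.2,1.3]{FrPa1} supplies. (Under the standing hypotheses it can also be read off structurally: by (H3), $\g^*\cong\g$ has a Weyl filtration and each $S^n(\g^*)$ has a good filtration, and $\Ext^1_G$ vanishes between a module with a Weyl filtration and one with a good filtration, so $\Ext^1_G(\g^*,S^n(\g^*))=0$.) Granting this, the induction produces the $\psi_n$, hence the coalgebra isomorphisms $\varphi_n=\psi_n^*$, and their limit is the desired $G$-equivariant, filtration preserving isomorphism of coalgebras $U\xrightarrow{\sim}S(\g)$ whose associated graded is the canonical isomorphism.
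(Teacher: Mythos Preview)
Your inductive framework is correct and clearly laid out: the dualisation to commutative local algebras, the identification of the lifting obstruction as a class in $\Ext^1_G(\g^*,S^n(\g^*))$, and the passage to the limit all work as stated. But the route differs from the paper's, and the point where you discharge the obstruction is where the argument is incomplete.

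The paper does not attack the obstructions degree by degree. It invokes \cite[Thm.~1.2]{FrPa1} as a black box: that theorem is precisely the coalgebra trick showing that the whole inductive tower collapses to the \emph{single} requirement that $\g$ admit a $G$-stable direct complement in $U$ (equivalently, a $G$-equivariant retraction $U\to\g$). One such projection manufactures all the higher splittings simultaneously via the comultiplication. The paper then constructs this one retraction concretely: using a form of Richardson's property it embeds $\g$ as a $DG$-equivariant direct summand of some $\gl(V)$, extends the inclusion to an algebra map $U\to\End_k(V)$, and composes with the projection $\gl(V)\to\g$; since the centre of $G$ acts trivially on $U$, the resulting $DG$-map is in fact a $G$-map.

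Your two proposed justifications for the vanishing both fall short under the standing hypotheses (H1)--(H3). Citing \cite[1.2,1.3]{FrPa1} does not close the gap: the reduction in \cite[1.2]{FrPa1} is exactly the mechanism you have reproduced by hand, while \cite[1.3]{FrPa1} establishes the complement under hypotheses different from (H1)--(H3) --- this is why the paper supplies its own argument rather than simply quoting the reference. The parenthetical good-filtration argument is also not ``read off structurally'': even granting that $\g\cong\g^*$ is a tilting module under (H1)--(H3) (which itself needs an argument, say via Richardson's property or a type-by-type check), it is not a standard fact that $S^n(M)$ has a good filtration for an arbitrary tilting module $M$; for $n\ge p$ the symmetric power is only a quotient of $M^{\otimes n}$, not a summand. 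So the vanishing of the obstruction is exactly where the paper's explicit construction is doing the work that your argument leaves open.
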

\begin{proof}
By \cite[Thm~1.2]{FrPa1} it is enough to show that $\g$ has a $G$-stable direct complement in $U$. Let $\tilde{G}$ be as in \cite[Sect.~4]{Prem} (or \cite[Sect.~6]{GoPr}). So $\tilde{G}$ is the direct product of the simple factors of $DG$ with the factors isomorphic to $\SL_m$, for some $m$ with $p|m$, replaced by $\GL_m$. Then $D\tilde{G}=DG$, since, by (H1), $DG$ is the direct product of its simple factors. By \cite[Lem.~6.2]{GoPr} and \cite[Lem.~4.1 proof]{Prem}\footnote{There is a flaw in the proof of \cite[Lem.~4.1]{Prem}, also noticed by Premet. For the $G_i$ of type $B,C,D$ one has to use the representation given by the vector representation of the corresponding classical group rather than the adjoint representation.} there exist a torus $\tilde{T}$, a toral subalgebra $\t_0$ of $\tilde{\t}=\Lie(\tilde{T})$ and a $DG$-equivariant embedding
$$\g\hookrightarrow\tilde{\g}\oplus\tilde{\t}=\Lie(\tilde{G}\times\tilde{T})\eqno{(*)}$$
of restricted lie algebras, such that $\tilde{\g}\oplus\tilde{\t}=\g\oplus\t_0$.

We now follow \cite[3.3]{Prem}; see also \cite[6.3]{PrSkr}. Modifying the definition there slightly, we say that a linear algebraic group $G'$ has {\it Richardson's property} if there exists a finite dimensional representation $G'\to\GL(V)$ such that the corresponding representation $\g'=\Lie(G')\to\gl(V)$ is faithful and $\g'$ has a $G'$-stable complement in $\gl(V)$. Using that $p$ is good one easily checks that $\tilde{G}$ has Richardson's property; the same holds for $\tilde{G}\times\tilde{T}$, see e.g. \cite[I.3]{Sl2}. Combining this with (*) we obtain a $DG$-equivariant embedding $\g\oplus\t_0\hookrightarrow\gl(V)$ of restricted Lie algebras for some finite dimensional $DG$-module $V$, such that $\g\oplus\t_0$ has a $DG$-stable complement in $\gl(V)$. Adding $\t_0$ to this complement we obtain a $DG$-stable complement for $\g$ in $\gl(V)$. By the universal property of $U$ we can extend the embedding $\g\hookrightarrow\gl(V)$ to a homomorphism of associative algebras $U\to\End_k(V)$. This homomorphism is $DG$-equivariant and if we combine it with the projection $\gl(V)\to\g$, then we obtain a projection $U\to \g$ of $DG$-modules. Since the centre of $G$ acts trivially on $U$, this is a projection of $G$-modules.
\end{proof}

The preceding proposition implies that each $G$-module $U_{n-1}$ has a $G$-stable direct complement in $U_n$. So we obtain the following
\begin{cornn}
The canonical embeddings $\gr(U^G)\hookrightarrow S(\g)$ and $\gr(Z)\hookrightarrow S(\g)$ induce isomorphisms of algebras
\begin{align*}
\gr(U^G)&\stackrel{\sim}{\to}S(\g)^G\text{\quad and}\\
\gr(Z)&\stackrel{\sim}{\to}S(\g)^\g.
\end{align*}
\end{cornn}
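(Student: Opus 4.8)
The plan is to deduce the statement formally from the preceding proposition; the only substantive input is the existence of $G$-stable complements for the filtration steps of $U$, which the proposition (via the remark following it) provides.

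First I would split the filtration. For each $n$ choose a $G$-submodule $C_n\subseteq U_n$ with $U_n=U_{n-1}\oplus C_n$; iterating gives $U=\bigoplus_{n\ge0}C_n$ as a $G$-module with $U_n=\bigoplus_{m\le n}C_m$, and the composite $C_n\hookrightarrow U_n\twoheadrightarrow U_n/U_{n-1}=\gr^n(U)$ is a $G$-equivariant isomorphism. Hence $\gr(U)\cong\bigoplus_n C_n$ as graded $G$-modules, and, by taking differentials, as graded $\g$-modules as well; in particular each $C_n$ is $\g$-stable.

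Next I would pass to invariants uniformly for $H=G$ and $H=\g$. Let $\pi_n\colon U\to C_n$ be the ($H$-equivariant) projection. Writing $u=\sum_n\pi_n(u)$ and using directness of the sum together with the $H$-equivariance of the $\pi_n$, one sees that $u\in U^H$ exactly when $\pi_n(u)\in C_n^H$ for all $n$; thus $U^H=\bigoplus_n C_n^H$, and the induced filtration is $(U^H)_n=U^H\cap U_n=\bigoplus_{m\le n}C_m^H$. Therefore $\gr^n(U^H)=(U^H)_n/(U^H)_{n-1}\cong C_n^H$, and under the identifications above the canonical embedding $\gr(U^H)\hookrightarrow\gr(U)$ is just $\bigoplus_n C_n^H\hookrightarrow\bigoplus_n C_n$, whose image is $\bigl(\bigoplus_n C_n\bigr)^H=\gr(U)^H$. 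So $\gr(U^H)=\gr(U)^H$ inside $\gr(U)$.

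Finally, since the canonical isomorphism $\gr(U)\stackrel{\sim}{\to}S(\g)$ is $G$-equivariant (PBW) and grading-preserving, it is also $\g$-equivariant and carries $\gr(U)^G$ onto $S(\g)^G$ and $\gr(U)^\g$ onto $S(\g)^\g$. Combined with the previous step and the identity $Z=U^\g$, this yields the two claimed isomorphisms, compatibly with the canonical embeddings into $S(\g)$. I do not expect a real obstacle here once the proposition is available: the argument is a routine unwinding of the fact that a $G$-equivariantly split filtration passes to invariants compatibly with $\gr$. The only point to be careful about is that the argument must be run identically for $H=\g$ and $H=G$ — in characteristic $p$ one generally has $S(\g)^\g\supsetneq S(\g)^G$ — but since the splitting $U=\bigoplus_n C_n$ is by $G$-submodules, hence $\g$-submodules, both cases fall out of the same directness argument, with no appeal to complete reducibility.
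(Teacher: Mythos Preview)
Your proof is correct and takes essentially the same approach as the paper: the paper simply observes in one sentence that the proposition gives a $G$-stable complement to $U_{n-1}$ in $U_n$ and then states the corollary without further argument, while you have carefully written out the routine deduction (splitting the filtration, passing to invariants via the equivariant projections, and using that $G$-submodules are $\g$-submodules) that the paper leaves implicit.
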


\subsection{A restriction theorem for U}\label{ss.Urestriction}
Take $\rho\in X(T)$ such that $\rho|_{(T\cap DG)}$ is the half sum of the positive roots in $X(T\cap DG)$; note that this makes sense because of assumption (H1). We have $d\rho\in\t^*(\mb F_p)$ and we will simply write $\rho$ instead of $d\rho$. Then $\rho(h_\alpha)=1$ for all simple roots $\alpha$. Define the {\it dot action} of $W$ on $\t^*$ by $$w\dt\lambda=w(\lambda+\rho)-\rho.$$
The corresponding action of $W$ on $S(\t)$ is given by $s_\alpha\dt h=s_\alpha(h)-\alpha(h)$ for $h\in\t$ and $\alpha$ simple. This shows that the dot action on $S(\t)$ and $\t^*$ is independent of the choice of $\rho$.
Let $\gamma$ be the comorphism of the isomorphism $\lambda\mapsto\lambda-\rho:\t^*\stackrel{\sim}{\to}\t^*$ of varieties. We have $\gamma(h)=h-\rho(h)$ for all $h\in\t$, $w(\lambda)-\rho=w\dt(\lambda-\rho)$ and $\gamma(w\dt x)=w(\gamma(x))$ for all $\lambda\in\t^*$, all $x\in S(\t)$ and all $w\in W$. So $\gamma$ induces an isomorphism
$$\gamma:S(\t)^{W\nts\dt}\stackrel{\sim}{\to}S(\t)^W.$$

Let $\Psi\,\,\colon U=U(\u)\ot U(\t)\ot U(\u^+)\longrightarrow\, U(\t)=S(\t)$ be the linear map taking $x\ot h\ot y$ to $x^0hy^0$, where $u^0$ denotes the scalar part of $u\in U$ with respect to the decomposition $U=K1\oplus\, U_+$ where $U_+$ is the augmentation ideal of $U$. The restriction of $\Psi$ to $U^{N_G(T)}$ is an algebra homomorphism. Using the descriptions of $\Phi$ and $\Psi$ and a PBW-basis it follows that for  $x\in U\sm\{0\}$ with $\Phi(\gr(x))\neq 0$ we have $\Psi(x)\neq 0$ and
\begin{equation}\label{eqn.gr}
\gr(\gamma(\Psi(x)))=\gr(\Psi(x))=\Phi(\gr(x)).
\end{equation}
As we have seen in Subsection~\ref{ss.regsselts}, the restriction of $\Phi$ to $S(\g)^G$ is injective. It follows that the restriction of $\Phi$ to $U^G$ is injective and that the displayed equalities hold for all $x\in U^G$. We can also deduce that $\Psi(U^G)=S(\t)^{W\nts\dt}$ from the fact that $\Phi(S(\g)^G)=S(\t)^W$; see the proof of Proposition~2.1 in \cite{Ve}. So we obtain an isomorphism
$$\Psi:U(\g)^G\stackrel{\sim}{\to}S(\t)^{W\nts\dt}.$$
This was also proved in \cite[Lem.~5.4]{KW} under the only condition that $h_\alpha\ne0$ for all $\alpha\in\Sigma$.
We have $\rho(h^{[p]})=\rho(h)^p$ and $\gamma(\eta(h))=\eta(h)$ for all $h\in\t$. The homomorphisms $\eta$, $\Phi$ and $\Psi$ are related by
\begin{equation}\label{eqn.eta}
\eta\circ\Phi=\Psi\circ\eta :S(\g)\to S(\t).
\end{equation}

\subsection{Veldkamp's theorem}\label{ss.veldkamp}
Recall that $d\alpha\ne0$ for all $\alpha\in\Sigma$. We have seen in Subsection~\ref{ss.regsselts} that $p$ is not a torsion prime for $G$. Therefore $S(\t^*)^W$ is a graded polynomial algebra by \cite[Cor. to Thm.~2, Thm.~3]{Dem}. By (H3) we have an isomorphism $S(\t)^W\cong S(\t^*)^W$ of graded algebras, so $S(\t)^W$ is a graded polynomial algebra. Let $\sigma_1,\ldots,\sigma_r$ be algebraically independent homogeneous generators of $S(\t)^W$. Put $s_i=\Phi^{-1}(\sigma_i)\in S(\g)^G$ and put $u_i=\Psi^{-1}(\gamma^{-1}(\sigma_i))\in U^G$. Note that $\gr(u_i)=s_i$ by \eqref{eqn.gr}. The following theorem was first proved in \cite{Ve} under much stronger conditions on $p$.
\begin{thmnn}[{cf. \cite{Ve}, \cite{KW}, \cite{MiRu}, \cite{BrGo}}]\
\begin{enumerate}[{\rm(i)}]
\item $S(\g)^\g$ is a free $S(\g)^p$-module with basis $\{s_1^{k_1}\cdots s_r^{k_r}\,|\, \, 0\leq k_i<p\}$.
\item $S(\g)^\g\cong S(\g)^p\otimes_{(S(\g)^p)^G}S(\g)^G$.
\item $Z$ is a free $Z_p$-module with basis $\{u_1^{k_1}\cdots u_r^{k_r}\,|\, \, 0\leq k_i<p\}$.
\item $Z\cong Z_p\otimes_{Z_p^G}U^G$.
\end{enumerate}
\end{thmnn}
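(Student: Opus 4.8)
The plan is to derive all four parts of Veldkamp's theorem from the graded version, using the filtration-to-associated-graded transfer principle recorded in Subsection~\ref{ss.filtration}. The key structural input is the pair of restriction theorems $\Phi\colon S(\g)^G\xrightarrow{\sim}S(\t)^W$ and its Frobenius-twisted counterpart $(S(\g)^p)^G\xrightarrow{\sim}(S(\t)^p)^W$ from Subsection~\ref{ss.regsselts}, together with the fact that $S(\t)^W$ is a graded polynomial algebra on $\sigma_1,\dots,\sigma_r$ (Subsection~\ref{ss.veldkamp}) and that $\eta\colon S(\g)^{(1)}\xrightarrow{\sim}Z_p$ is a $G$-equivariant isomorphism carrying $S(\g)^{pG}$ to $Z_p^G$ (via \eqref{eqn.eta}).

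First I would prove (i) and (ii) at the level of $S(\g)$. Since $S(\t)^W=k[\sigma_1,\dots,\sigma_r]$ and $(S(\t)^p)^W=k[\sigma_1^p,\dots,\sigma_r^p]$ (the $p$-th powers of a polynomial generating set again generate the $p$-th power subalgebra over a perfect field), $S(\t)^W$ is free over $(S(\t)^p)^W$ with basis the monomials $\sigma_1^{k_1}\cdots\sigma_r^{k_r}$, $0\le k_i<p$; this is just the standard fact that $k[x_1,\dots,x_r]$ is free over $k[x_1^p,\dots,x_r^p]$ on such monomials. Applying the isomorphism $\Phi^{-1}$ transports this to the statement that $S(\g)^G$ is free over $(S(\g)^p)^G$ on $\{s_1^{k_1}\cdots s_r^{k_r}\}$. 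The nontrivial classical ingredient is the identity $S(\g)^\g=S(\g)^p\cdot S(\g)^G$ together with the assertion that the multiplication map $S(\g)^p\otimes_{(S(\g)^p)^G}S(\g)^G\to S(\g)^\g$ is an isomorphism; this is where one uses that $S(\g)$ is a free module over $S(\g)^p$ (with basis the monomials in the $p$-th powers of a basis of $\g^*$, so that $S(\g)^\g=S(\g)^p\otimes_{(S(\g)^p)}S(\g)^\g$ requires understanding $S(\g)^\g$ as $p$-th powers times the ``honest'' invariants) and the restriction theorem in its twisted form. I expect this is essentially Veldkamp's original argument as refined in \cite{KW,MiRu,BrGo}, so I would cite it and then observe that (i) follows by combining $S(\g)^\g=S(\g)^p\otimes_{(S(\g)^p)^G}S(\g)^G$ with the freeness of $S(\g)^G$ over $(S(\g)^p)^G$ just established, using that $S(\g)^p=S(\g)^{p}\otimes_{(S(\g)^p)^G}(S(\g)^p)^G$ tautologically.

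For (iii) and (iv) I would lift from $S(\g)$ to $U$ via the filtration. By the Corollary in Subsection~\ref{ss.filtration} the canonical embedding induces $\gr(Z)\xrightarrow{\sim}S(\g)^\g$, and one checks $\gr(Z_p)\xrightarrow{\sim}S(\g)^p$ (indeed $\gr(\eta(x))=x^p$ for $x\in\g$, matching $\deg(x^p-x^{[p]})=p$) and $\gr(u_i)=s_i$ by \eqref{eqn.gr} and the choice $u_i=\Psi^{-1}(\gamma^{-1}(\sigma_i))$. Now, given the relation in the associated graded, a standard filtered-module argument shows that the monomials $u_1^{k_1}\cdots u_r^{k_r}$ with $0\le k_i<p$ form a $Z_p$-basis of $Z$: they span because their associated-graded images $s_1^{k_1}\cdots s_r^{k_r}$ span $S(\g)^\g$ over $S(\g)^p=\gr(Z_p)$ and an easy induction on degree lets one peel off leading terms; they are independent because a nontrivial $Z_p$-linear relation of minimal total degree would give, after passing to $\gr$, a nontrivial $S(\g)^p$-linear relation among the $s_1^{k_1}\cdots s_r^{k_r}$, contradicting (i). This proves (iii). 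Finally (iv): the multiplication map $Z_p\otimes_{Z_p^G}U^G\to Z$ sends the basis $\{1\otimes u_1^{k_1}\cdots u_r^{k_r}\}$ of the left side (which is free over $Z_p$ on these, since $U^G$ is polynomial on the $u_i$ and $Z_p^G=\eta(S(\g)^{pG})$ is polynomial on the $\eta(s_i)$, so $U^G$ is free over $Z_p^G$ on the monomials $u_1^{k_1}\cdots u_r^{k_r}$ with $0\le k_i<p$) to the $Z_p$-basis of $Z$ from (iii); hence it is an isomorphism. The main obstacle is part (ii) — the assertion that $S(\g)^\g$ is the tensor product $S(\g)^p\otimes_{(S(\g)^p)^G}S(\g)^G$ rather than merely containing the image of this multiplication map — which is the crux of Veldkamp's theorem and the one step I would not reprove from scratch but import from \cite{Ve,KW,MiRu,BrGo} after checking the hypotheses (H1)--(H3) supply what those references need (good $p$, $DG$ simply connected, $\g\cong\g^*$).
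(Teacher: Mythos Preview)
Your overall architecture---establish the graded statement first, then lift to $U$ via the filtration and the identification $\gr(Z)\cong S(\g)^\g$, $\gr(Z_p)\cong S(\g)^p$, $\gr(u_i)=s_i$---matches the paper exactly, and your argument for passing from (i) to (iii) by the standard ``span by induction on degree, independence by taking $\gr$ of a minimal relation'' is precisely what the paper means by ``(iii) can be deduced from (i) as in \cite{Ve}''. Likewise, your deduction of (iv) from (iii) is in the same spirit as the paper's one-line ``since the bases consist of $G$-invariants''.

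The one substantive divergence is the logical order of (i) and (ii). You treat (ii) as the hard input to be imported from \cite{Ve,KW,MiRu,BrGo} and then read off (i) from it together with the (easy) freeness of $S(\g)^G$ over $(S(\g)^p)^G$. The paper does the reverse: it proves (i) \emph{directly} and then observes that (ii) is immediate because the basis elements $s_1^{k_1}\cdots s_r^{k_r}$ lie in $S(\g)^G$. More importantly, the paper's direct proof of (i) does \emph{not} proceed by importing Veldkamp's original argument (which required much stronger conditions on $p$), but instead invokes the differential criterion for regularity \cite[7.14]{Jan3}, \cite[3.14]{Sl1} together with Skryabin's theorem \cite[Thm.~5.4]{Skry}, with details in \cite[Sect.~3]{PrT}. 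This is the ingredient that makes the result go through under the mild hypotheses (H1)--(H3); so your plan to ``import (ii) after checking (H1)--(H3) supply what those references need'' is the one place where you would likely get stuck without bringing in Skryabin's result. Once you use Skryabin to get (i), your version and the paper's coincide.
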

\noindent Assertions (ii) and (iv) are immediate consequences of (i) and (iii) respectively, since the bases consist of $G$-invariants. Assertion (i) can be proved using the differential criterion for regularity (see \cite[7.14]{Jan3}, \cite[3.14]{Sl1})
and a result of Skryabin \cite[Thm.~5.4]{Skry}. See \cite[Sect.~3]{PrT} for more details. Assertion (iii) can be deduced from (i) as in \cite{Ve}.

In Section~\ref{s.rationality} we will need a geometric version of assertion (iv) of the above theorem. To state it we need some notation. Let $\xi:\t^*\to\t^{(1)*}$ be the morphism induced by the homomorphism $\eta:S(\t^{(1)})\to S(\t)$ from Subsection~\ref{ss.pcentre} and let $\zeta^{(1)*}:\g^{(1)*}\to\t^{(1)*}/W$ be the morphism induced by the homomorphism $k[\t^{(1)*}]^W\stackrel{\sim}{\to}k[\g^{(1)*}]^G\hookrightarrow k[\g^{(1)*}]$. We have $\xi(\lambda)(h)=\lambda(h)^p-\lambda(h^{[p]})$ for all $\lambda\in\t^*$ and $h\in\t$. For $\lambda\in\t^*(\mb F_p)$ we have $\lambda(h^{[p]})=\lambda(h)^p$ for all $h\in\t$. Therefore $\xi(\lambda)=0$ for all $\lambda\in\t^*(\mb F_p)$ and $\xi(w\dt\lambda)=\xi(w(\lambda))=w(\xi(\lambda))$ for all $\lambda\in\t^*$, $h\in\t$ and $w\in W$. Furthermore, we have $\zeta^{(1)*}(\chi)=\pi(\chi_s')$, where $\chi_s'$ is a conjugate of the semisimple part $\chi_s$ of $\chi$ that lies in $\t^{(1)*}$ and $\pi:\t^{(1)*}\to\t^{(1)*}/W$ is the canonical morphism.

\begin{cornn}[{cf. \cite[Sect.~3]{Kry}, \cite[Cor.~3]{MiRu}}]
Let $\xi$ and $\zeta^{(1)*}$ be as defined above. There is a canonical $G$-equivariant isomorphism
$$\mc Z\stackrel{\sim}{\to}\g^{*(1)}\times_{\t^{(1)*}/W}\t^*/W{\nts\dt}\ .$$
Here the $G$-action on the fibre product comes from the adjoint action on the first factor, the morphism $\t^*/W{\nts\dt}\to\t^{(1)*}/W$ is induced by $\xi$ and the morphism $\g^{*(1)}\to\t^{(1)*}/W$ is the composite of $\chi\mapsto\chi^p:\g^{*(1)}\stackrel{\sim}{\to}\g^{(1)*}$ and $\zeta^{(1)*}$.
\end{cornn}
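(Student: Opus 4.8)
The plan is to translate the algebraic statement of Veldkamp's theorem, assertion (iv), into the language of affine schemes by applying $\Maxspec$ (equivalently $\Spec$, since everything in sight is a finitely generated domain over the algebraically closed field $k$). Recall from Subsection~\ref{ss.veldkamp} that $Z\cong Z_p\otimes_{Z_p^G}U^G$. The functor $\Maxspec$ turns a tensor product of commutative $k$-algebras over a common subalgebra into a fibre product of the corresponding varieties, so $\mc Z=\Maxspec(Z)\cong\Maxspec(Z_p)\times_{\Maxspec(Z_p^G)}\Maxspec(U^G)$. It therefore remains to identify each of the three varieties in this fibre product, together with the two structure morphisms, with the objects appearing in the statement, and to check that all identifications are $G$-equivariant.

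First I would identify the three factors. For $Z_p$: the isomorphism $\eta:S(\g)^{(1)}\stackrel{\sim}{\to}Z_p$ from Subsection~\ref{ss.pcentre} is a $G$-equivariant algebra isomorphism, and $S(\g)^{(1)}=k[\g^{*(1)}]$ after we use the identification of $S(\g)$ with $k[\g^*]$ and pass to Frobenius twists (note $S(\g)^{(1)}=k[\g^*]^{(1)}=k[\g^{*(1)}]$, since twisting the algebra of functions corresponds to twisting the variety). Hence $\Maxspec(Z_p)\cong\g^{*(1)}$, $G$-equivariantly. For $U^G$: the isomorphism $\Psi\circ(\text{something})$, or more precisely $\gamma^{-1}\circ\Psi:U^G\stackrel{\sim}{\to}S(\t)^{W\nts\dt}=k[\t^*/W{\nts\dt}]$, which was established in Subsection~\ref{ss.Urestriction}, gives $\Maxspec(U^G)\cong\t^*/W{\nts\dt}$. (One has to be slightly careful: $\Maxspec(S(\t)^{W\nts\dt})$ really is the quotient variety $\t^*/W{\nts\dt}$ because $W$ acts on $\t^*$ as a reflection group for the dot action too, so the invariant ring is the coordinate ring of the categorical quotient, which is a genuine orbit space — but we only need the scheme-theoretic identification via coordinate rings.) For $Z_p^G$: combining the two previous isomorphisms, $Z_p^G\cong(S(\g)^{(1)})^G\cong S(\g)^{(1)G}$, and by the restriction theorem for $\g^{(1)*}$ recalled in Subsection~\ref{ss.regsselts} (i.e.\ $\Phi$ restricted to twists) this is isomorphic to $S(\t)^{(1)W}\cong k[\t^{(1)*}/W]$, so $\Maxspec(Z_p^G)\cong\t^{(1)*}/W$.

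Next I would check that the two structure maps $Z_p^G\hookrightarrow Z_p$ and $Z_p^G\hookrightarrow U^G$ become, under these identifications, respectively the map $\g^{*(1)}\to\t^{(1)*}/W$ described in the statement and the map $\t^*/W{\nts\dt}\to\t^{(1)*}/W$ induced by $\xi$. For the first: the inclusion $Z_p^G\hookrightarrow Z_p$ corresponds under $\eta$ to the inclusion $S(\g)^{(1)G}\hookrightarrow S(\g)^{(1)}$, which is the comorphism of $\g^{*(1)}\to\g^{(1)*}\to\t^{(1)*}/W$ (the first arrow being $\chi\mapsto\chi^p$ and the second being $\zeta^{(1)*}$) — this is exactly the composite named in the statement, and its fibre over $\pi(\chi_s')$ consists of the $\chi$ whose semisimple part is $W$-conjugate to $\chi_s'$, as recorded just before the corollary. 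For the second map, the key identity is \eqref{eqn.eta}, namely $\eta\circ\Phi=\Psi\circ\eta$: chasing $Z_p^G\hookrightarrow U^G$ through $\eta$ on the source and $\gamma^{-1}\Psi$ on the target, and using $\gamma\circ\eta=\eta$ on $\t$ (also recorded in Subsection~\ref{ss.Urestriction}), one finds that the comorphism is precisely the one induced by $\xi:\t^*\to\t^{(1)*}$, which indeed descends to $\t^*/W{\nts\dt}\to\t^{(1)*}/W$ because $\xi(w\dt\lambda)=w(\xi(\lambda))$. Once both squares are seen to commute with the stated arrows, the universal property of fibre products yields the desired $G$-equivariant isomorphism; the $G$-equivariance is automatic since $\eta$, $\Phi$, $\Psi$ and $\gamma$ are all $G$- or $W$-equivariant as appropriate ($G$ acts trivially on the $U^G$ and $S(\t)$ factors, consistent with the trivial $G$-action on $\t^*/W{\nts\dt}$).

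The main obstacle is bookkeeping rather than mathematics: one must keep straight the three variants of the Frobenius twist on the torus side — $\t^{(1)}$, $\t^{*(1)}$, $\t^{(1)*}$ and their analogues for $\g$ — together with the two $W$-actions (ordinary and dot) and the various restriction and twisting isomorphisms $\eta$, $\Phi$, $\Psi$, $\gamma$, $\xi$, $\zeta^{(1)*}$, and verify that the two commutative squares really do commute \emph{on the nose} with the maps named in the statement. In particular one should double-check that the composite $\g^{*(1)}\stackrel{\sim}{\to}\g^{(1)*}$, $\chi\mapsto\chi^p$, is the correct one making $\eta:S(\g)^{(1)}\to Z_p$ into the comorphism of a map landing in $\g^{(1)*}$ (as opposed to $\g^{*(1)}$), using the canonical isomorphism $V^{*(1)}\cong V^{(1)*}$, $f\mapsto f^p$, from Subsection~\ref{ss.pcentre}. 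No new idea is needed beyond Veldkamp's theorem (iv) and the naturality of $\Maxspec$ with respect to tensor products; the content is entirely in making the identifications canonical and $G$-equivariant.
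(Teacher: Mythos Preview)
Your approach is correct and matches the paper's: the corollary is presented there without a separate proof, as the direct geometric translation of Veldkamp's theorem~(iv), and the comorphism $x\otimes y\mapsto\eta(x^{1/p})\otimes\Psi^{-1}(y)$ is recorded explicitly later in the proof of Theorem~\ref{thm.ufd}. One minor slip: the identification $U^G\cong k[\t^*/W{\nts\dt}]$ is via $\Psi$ alone (which already lands in $S(\t)^{W\nts\dt}$ by Subsection~\ref{ss.Urestriction}), not $\gamma^{-1}\circ\Psi$; the map $\gamma$ is only needed if you want to pass to $S(\t)^W$, which you do not here.
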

Identify $\mc Z$ with a closed subvariety of $\g^{*(1)}\times\t^*/W{\nts\dt}$ by means of the above isomorphism. Let $\pi_\dt:\t^*\to\t^*/{W\nts\dt}$ be the canonical morphism. If $\chi\in\g^{*(1)}$, $\lambda\in\t^*$ and $\chi_s'$ is a conjugate of the semisimple part $\chi_s$ of $\chi$ that lies in $\t^*$, then $(\chi,\pi_\dt(\lambda))\in\mc Z$ if and only if $\xi(\lambda)=w(\chi_s'^p)$ for some $w\in W$, that is, if and only if for some $w\in W$ we have
$$\lambda(h)^p-\lambda(h^{[p]})=w(\chi_s')(h)^p\text{\quad for all\ }h\in\t\ .$$
\begin{remsnn}
1.\ In the above description of the centre as a fibre product we could have avoided working with $\g^{*(1)}$ and the isomorphism $\g^{*(1)}\stackrel{\sim}{\to}\g^{(1)*}$, but the advantage is that now the first component $\chi$ of a point $(\chi,\pi_\dt(\lambda))$ of $\mc Z$ can be interpreted as the $p$-character. More precisely, it is the $p$-character of any irreducible $\g$-module whose central annihilator corresponds to $(\chi,\pi_\dt(\lambda))$.\\
2.\ Let $\chi\in\g^*$ and $\lambda\in\t^*$ such that $\chi\in\b^*$ (i.e. $\chi(\u^+)=0$) and $\lambda(h)^p-\lambda(h^{[p]})=\chi(h)^p$ for all $h\in\t$. Then $(\chi,\pi_\dt(\lambda))\in\mc Z$ and the corresponding maximal ideal is the central annihilator of the baby Verma module $Z_\chi(\lambda)$, see \cite[6.7,6.8]{Jan2}. For any point $(\chi,\pi_\dt(\lambda))\in\mc Z$ there exists a $g\in G$ such that $g\cdot\chi$ and $\lambda$ satisfy the above two conditions.\\
\end{remsnn}

\section{Rationality}\label{s.rationality}
In this section we take a geometric viewpoint and we will identify $\mc Z$ with the closed subvariety of $\g^{*(1)}\times\t^*/W{\nts\dt}$ described in Subsection~\ref{ss.veldkamp}.
To prove Theorem~\ref{thm.rat} below we need to ``untwist the $G$-action on $\mc Z$". Since $G$, $\g$ and the adjoint action of $G$ on $\g$ are defined over $\mb F_p$, there exists a $k$-algebra isomorphism $S(\g)^{(1)}\stackrel{\sim}{\to}S(\g)$ such that $g\cdot x$ is mapped to $Fr(g)\cdot x$; see e.g. \cite[I.9.10]{Jan1}. We denote the action of $G$ on $k[\g^{*(1)}]\cong k[\g^{(1)*}]=S(\g)^{(1)}$ that corresponds via this isomorphism to the adjoint action of $G$ on $S(\g)$ by $g\star x$ and call it the {\it star action} of $G$ on $\g^{*(1)}$ (this is the corresponding variety). We have $$Fr(g)\star\chi=g\cdot\chi\ \text{\quad for all\ }g\in G\text{\ and\ }\chi\in\g^{*(1)}.$$
From the corollary to Veldkamp's theorem we deduce that the star action extends to an action on the Zassenhaus variety by $g\star(\chi,\pi_\dt(\lambda))=(g\star\chi,\pi_\dt(\lambda))$. One can also deduce the existence of the star action on ${\mc Z}$ directly from assertion (iv) of Veldkamp's theorem and the isomorphism $\eta:S(\g)^{(1)}\stackrel{\sim}{\to}Z_p$. Of course the star and ordinary action of $G$ on $\mc Z$ are related by the same equation as above. Put $\mc Z_{\rm rss}=pr_1^{-1}(\g^{*(1)}_{\rm rss})$, where $pr_1:\mc Z\to\g^{*(1)}$ is the projection on the first component (the ``$p$-character").

\begin{thmgl}\label{thm.rat}
Put $F=\Phi^{-1}\big(\prod_{\alpha\in\Sigma}(h_\alpha^p-h_\alpha)\big)\in k[\g^*]^G$. Then the variety $\mc Z_{\rm rss}$ with the star action is $G$-equivariantly isomorphic to $\{\chi\in\g^*\,|\,F(\chi)\ne0\}$. In particular, ${\rm Frac}(Z)$ is rational.
\end{thmgl}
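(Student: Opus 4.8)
The plan is to exhibit an explicit $G$-equivariant isomorphism between $\mc Z_{\rm rss}$ (with the star action) and the open set $\g^*_F:=\{\chi\in\g^*\,|\,F(\chi)\ne0\}$, and then conclude rationality from the fact that $\g^*_F$ is an open subvariety of affine space. The key point is that we have two Galois coverings with group $W$ available: on the one hand, by Subsection~\ref{ss.regsselts} the variety $\g^*_{\rm rss}$ is the quotient $(G/T\times\t^*_{\rm reg})/W$; on the other hand, I expect $\mc Z_{\rm rss}$ to admit an analogous presentation as a $W$-quotient of some open subvariety of $\g^{*(1)}\times\t^*_{\rm reg}$. Concretely, using the fibre-product description in the Corollary to Veldkamp's theorem, a point of $\mc Z_{\rm rss}$ is a pair $(\chi,\pi_\dt(\lambda))$ with $\chi$ regular semisimple and $\xi(\lambda)=w(\chi_s'^p)$ for some $w\in W$; pulling back the covering $G/T\times\t^*_{\rm reg}\to\g^{*(1)}_{\rm rss}$ of the first factor (in the star action), I would build a variety $\widetilde{\mc Z}_{\rm rss}$ mapping to $\mc Z_{\rm rss}$ with the $W$-action $w(gT,\lambda)=(gw^{-1}T,w\dt\lambda)$, and check it is a Galois $W$-covering. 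Here the compatibility relation $\xi(w\dt\lambda)=w(\xi(\lambda))$ from Subsection~\ref{ss.veldkamp} is exactly what makes the $W$-actions on the two factors match up.

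Next I would identify $\widetilde{\mc Z}_{\rm rss}$ with an open subvariety of the covering $G/T\times\t^*_{\rm reg}$ of $\g^*_{\rm rss}$ itself. The mechanism is the morphism $\xi:\t^*\to\t^{(1)*}$: over $\t^*_{\rm reg}$ one has $\xi(\lambda)(h)=\lambda(h)^p-\lambda(h^{[p]})$, and I would observe that the map sending a pair $(gT,\lambda)$ (with $\lambda$ regular and $\xi(\lambda)$ also regular, after the appropriate twist) to the point of $\widetilde{\mc Z}_{\rm rss}$ whose $p$-character is $g\star(\text{the unique }\mu\in\t^{(1)*}\text{ with }\mu=\xi(\lambda))$ and whose dot-component is $\pi_\dt(\lambda)$ is a $W$-equivariant isomorphism onto the locus where $\xi(\lambda)$ is regular. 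Passing to $W$-quotients, this yields $\mc Z_{\rm rss}\cong$ the open subset of $\g^*_{\rm rss}$ lying over the regular locus of $\t^{(1)*}$. Translating "the image of $\t^*_{\rm reg}$ under $\xi$ lands in $\t^{(1)*}_{\rm reg}$" back through $\Phi$ and $\eta$ gives precisely the non-vanishing of $F=\Phi^{-1}\big(\prod_{\alpha\in\Sigma}(h_\alpha^p-h_\alpha)\big)$, using $\xi(\lambda)(h_\alpha)=\lambda(h_\alpha)^p-\lambda(h_\alpha^{[p]})$ and $h_\alpha^{[p]}=h_\alpha$ (the $h_\alpha$ are toral, lying in $\t(\mb F_p)$); so the image is exactly $\g^*_F$. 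The $G$-equivariance for the star action follows because all constructions are $G$-equivariant by construction and $G$ acts only on the $G/T$-factor.

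For the final sentence, once $\mc Z_{\rm rss}\cong\g^*_F$ is established, ${\rm Frac}(Z)=k(\mc Z)=k(\mc Z_{\rm rss})\cong k(\g^*_F)=k(\g^*)$, which is purely transcendental of degree $\dim\g$ over $k$; note $\mc Z_{\rm rss}$ is a nonempty open subset of the irreducible variety $\mc Z$, so it has the same function field. The main obstacle I anticipate is the careful bookkeeping in the middle step: one must check that the pulled-back $W$-action is free (equivalently, that regular elements of $\t^*$ have trivial $W$-stabiliser for the dot action as well — which holds since the dot action differs from the linear action by a translation and the relevant stabiliser statement in Subsection~\ref{ss.regsselts} transfers), verify that the candidate map is a morphism with surjective differentials so that \cite[Prop.~II.6.6, Thm.~AG.17.3]{Bo} applies to pass to the quotient, and confirm that $\xi$ restricted to $\t^*_{\rm reg}$ is étale onto its image with the regularity condition cutting out exactly the right open set. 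These are the same technical ingredients already used for the covering of $\g^*_{\rm rss}$, so the hope is that they carry over with minor modifications, but the twisting by Frobenius and by $\rho$ must be tracked with care.
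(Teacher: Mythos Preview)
Your proposal is correct and follows essentially the same route as the paper: both construct an auxiliary $W$-covering of $\mc Z_{\rm rss}$, identify it (via the $\rho$-shift) with the open set $\widetilde O=\{(gT,\mu)\in G/T\times\t^*_{\rm reg}\mid H(\mu)\ne0\}$, and pass to $W$-quotients using the description of $\g^*_{\rm rss}$ in Subsection~\ref{ss.regsselts}. The paper packages the covering as the subvariety $C\subseteq G/T\times\t^{*(1)}_{\rm reg}\times\t^*$ cut out by the Artin--Schreier equations $\chi_i=\lambda_i^p-\lambda_i$ and proves that $C\to\mc Z_{\rm rss}$ is a $W$-quotient by checking separability directly (the polynomial $X^p-X-a$ is separable), which is a bit cleaner than the differential/\'etale check you outline; but the content is the same.
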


\begin{proof}
Recall that an element $x$ of $\g$ is toral if $x^{[p]}=x$. Let $(h_1,\ldots,h_r)$ be an $\mb F_p$-basis of $\t(\mb F_p)$. Then it is also a $k$-basis of $\t$ and the $h_i$ are toral. We write $\chi_i$ for the functional $\chi\mapsto \chi(h_i)^p$ and we write $\lambda_i$ for the functional $\lambda\mapsto\lambda(h_i)$. Then $k[\t^{*(1)}]$ is a polynomial algebra in the $\chi_i$ and $k[\t]$ is a polynomial algebra in the $\lambda_i$. Let $C$ be the closed subvariety of $G/T\times\t^{*(1)}_{\rm reg}\times\t^*$ defined by the equations
\begin{align}\label{eqn.C1}
\chi_i=\lambda_i^p-\lambda_i\text{\quad for\ }i=1,\ldots,r.
\end{align}
Let the morphism $\xi:\t^*\to\t^{(1)*}$ be defined as in Subsection~\ref{ss.veldkamp}. Then $(gT,\chi,\lambda)\in C$ if and only if
\begin{align}\label{eqn.C2}
\xi(\lambda)=\chi^p\ .
\end{align}
We define an action of $W$ on $C$ as follows $w\cdot(gT,\chi,\lambda)=(gw^{-1}T,w(\chi),w\dt\lambda)$. This action commutes with the $G$-action on $C$ that comes from the action by left multiplication on the first factor. Put $H=\Phi(F)=\prod_{\alpha\in\Sigma}(h_\alpha^p-h_\alpha)\in k[\t^*]^W$ and put
$$\widetilde{O}=\{(gT,\chi)\in G/T\times\t^*\,|\,H(\chi)\ne0\}.$$
Then $\widetilde{O}$ is an open subset of $G/T\times\t^*_{\rm reg}$ which is stable under $G$ and $W$, the actions being as described in Subsection~\ref{ss.regsselts}. Now $(gT,\chi,\lambda)\mapsto(gT,\lambda+\rho)$ is an isomorphism from $C$ to $\widetilde{O}$ which is equivariant for the actions of $G$ and $W$. Its inverse is given by
$$(gT,\lambda)\mapsto(gT,\xi(\lambda)^{1/p},\lambda-\rho).$$
Note that this is indeed a morphism, since the map $\lambda\mapsto\xi(\lambda)^{1/p}:\t^*\to\t^{*(1)}$ is obtained by composing the morphism $\xi:\t^*\to\t^{(1)*}$ with the inverse of the isomorphism $\lambda\mapsto\lambda^p:\t^{*(1)}\to\t^{(1)*}$ (which is $\lambda\mapsto\lambda^{1/p}$). Of course it is also clear from \eqref{eqn.C1} that dropping the $\chi$-component and shifting the $\lambda$-component by $\rho$ must be an isomorphism.
It follows that $C/W$ is G-equivariantly isomorphic to $\widetilde{O}/W$. Now put $$O=\{\chi\in\g^*\,|\,F(\chi)\ne0\}.$$ Then the $W$-quotient morphism $G/T\times\t^*_{\rm reg}\to\g^*_{\rm reg}$ induces a $W$-quotient morphism $\widetilde{O}\to O$ and we obtain a $G$-equivariant isomorphism $\widetilde{O}/W\stackrel{\sim}{\to}O$.

From the description of $\mc Z$ in Subsection~\ref{ss.veldkamp} it is clear that we have a surjective morphism $C\to\mc Z_{\rm rss}$ given by $(gT,\chi,\lambda)\mapsto (g\star\chi,\pi_\dt(\lambda))$. This morphism is constant on $W$-orbits and it is $G$-equivariant if we give $\mc Z$ the star action. To prove the theorem it suffices to show that this morphism is separable and that its fibers are single $W$-orbits, because then we have by \cite[Prop.~II.6.6]{Bo} that $\mc Z_{\rm rss}\cong C/W\cong O$, $G$-equivariantly. That the fibers are single $W$-orbits follows easily from \eqref{eqn.C2}, the $W$-equivariance property of $\xi$ (see Subsection~\ref{ss.veldkamp}) and the fact that the elements of $\t^{*(1)}_{\rm reg}$ have trivial stabiliser in $W$. It remains to show that the morphism $C\to{\mc Z}_{\rm rss}$ is separable. We have the following commutative diagram, where the maps are described on the right.

\begin{equation*}
\xymatrix{
C\ar[r]\ar@{->>}[d]&G/T\times\t^{*(1)}_{\rm reg}\ar@{->>}[d]\\
{\mc Z}_{\rm rss}\ar[r]&\g^{*(1)}_{\rm rss}
}
\quad
\quad
\xymatrix{
(gT,\chi,\lambda)\ar@{|->}[r]\ar@{|->}[d]&(gT,\chi)\ar@{|->}[d]\\
(g\star\chi,\pi_\dt(\lambda))\ar@{|->}[r]&g\star\chi
}
\end{equation*}
Since the polynomial $X^p-X-a\in L[X]$ is separable for any extension field $L$ of $k$ and any $a\in L$ we have, by \eqref{eqn.C1}, that the morphism $C\to G/T\times\t^{*(1)}_{\rm reg}$ is separable. Since $\g^{*(1)}_{\rm reg}$ with star action is isomorphic to $\g^*_{\rm reg}$ we have, by the results in Subsection~\ref{ss.regsselts}, that the morphism $G/T\times\t^{*(1)}_{\rm reg}\to\g^{*(1)}_{\rm rss}$ is separable. It follows that the morphisms $C\to\g^{*(1)}_{\rm rss}$ and $C\to{\mc Z}_{\rm rss}$ are separable.
\end{proof}

\begin{remsnn}
1.\ The proof shows that the morphism ${\mc Z}_{\rm rss}\to\g^{*(1)}_{\rm rss}$ is separable. This means that the field extension ${\rm Frac}(Z_p)\subseteq{\rm Frac}(Z)$ is separable. This was pointed out in \cite[Lemma~4.2]{KW} and it also follows from \cite[Prop.~3.14]{BrGo} and an elementary result in algebraic geometry (see e.g. \cite[Thm.~5.1.6(iii)]{Spr}).\\
2.\ The constructed isomorphism is also a $G$-equivariant isomorphism between $\mc Z_{\rm rss}$ with ordinary $G$-action and $\{\chi\in\g^{*[1]}\,|\,F(\chi)\ne0\}$, see Subsection~\ref{ss.pcentre}. Of course the latter variety is $G$-equivariantly isomorphic with $\{\chi\in\g^{*(1)}\,|\,F^p(\chi)\ne0\}$, but then one has to compose with the ``less natural" $G$-equivariant isomorphism $\g^{*[1]}\cong\g^{*(1)}$.\\
3.\ Assume $G=\SL_2$ and let $(h,e,f)$ be the standard basis of $\spl_2$ (so $[h,e]=2e$, $[h,f]=-2f$, $[e,f]=h$). If $p=2$, then $Z$ is a polynomial algebra generated by $h$, $e^2$ and $f^2$. Now assume that $p>2$. Then $Z$ is generated by $x=e^p, y=f^p, z=h^p-h\in Z_p$ and $c=4fe+(h+1)^2\in U^G$ subject to the relation $c^p-2c^{(p+1)/2}+c=4xy+z^2$. This can be seen using the restriction theorem for $U^G$. Note that $4xy+z^2$ is equal to $\eta$ applied to the element $4ef+h^2\in S(\g)^G$. Now $c^p-2c^{(p+1)/2}+c=c(c^{(p-1)/2}-1)^2$. So if we put $u=x/(c^{(p-1)/2}-1)$, $v=y/(c^{(p-1)/2}-1)$ and $w=z/(c^{(p-1)/2}-1)$, then $c=4uv+w^2$. Therefore $Z[(c^{(p-1)/2}-1)^{-1}]\cong k[u,v,w][((4uv+w^2)^{(p-1)/2}-1)^{-1}]$. The localisation of $Z$ in Theorem~\ref{thm.rat} is slightly bigger. There we make $c^p-2c^{(p+1)/2}+c=4xy+z^2$ invertible (cf. the proof of Theorem~\ref{thm.ufd} below). I am grateful to A.\ Premet for pointing this out to me.\\
4.\ The isomorphism $O\stackrel{\sim}{\to}\mc Z_{\rm rss}$ from Theorem~\ref{thm.rat} can be extended to a morphism $\g^*\to\mc Z_{\rm rss}$. Define a $p$-mapping on $\g^*$ to be a morphism from $\g^*$ to $\g^*$ such that for every maximal torus $T_1$ of $G$ it leaves $\t_1^*\subseteq\g^*$ stable and restricts on it to a $p$-semilinear map which is the identity on the $\mb F_p$-defined points for the unique split $\mb F_p$-structure on $T_1$. By the density of the semisimple elements such a map is unique. We now show that it exists. By hypothesis (H3) we have an isomorphism $\g\cong\g^*$ of $G$-modules. By \cite[I.7.16]{Jan1} and \cite[Chap.~V Annexe, Prop.~1]{Bou1} we have that there exists an $\mb F_p$-defined isomorphism $\g\cong\g^*$ of $G$-modules. Now we carry the $p$-mapping of $\g$ to $\g^*$ using such an isomorphism. One easily checks that this is a $p$-mapping on $\g^*$ and that it is $G$-equivariant. We denote it by $\chi\mapsto\chi^{[p]}:\g^*\to\g^*$.

Now let $\varphi:\g^*\stackrel{\sim}{\to}\g^{*(1)}$ be the isomorphism given by the $\mb F_p$-structure of $G$ that we fixed at the beginning of Section~\ref{s.prelim}. This isomorphism intertwines the $G$ action on $\g^*$ with the star action of $G$ on $\g^{*(1)}$. Recall that the isomorphism $\lambda\mapsto\lambda-\rho:\t^*\stackrel{\sim}{\to}\t^*$ intertwines the ordinary action of $W$ with the dot action of $W$. Therefore it induces an isomorphism $\t^*/W\stackrel{\sim}{\to}\t^*/W{\nts\dt}$ which we will also denote by $-\rho$. Finally, let $\zeta^*:\g^*\to\t^*/W$ be the canonical morphism (cf. the definition of $\zeta^{(1)*}$ in Section~\ref{ss.veldkamp}). Then the proof of Theorem~\ref{thm.rat} shows that the morphism
$$\chi\mapsto(\varphi(\chi^{[p]}-\chi),\zeta^*(\chi)-\rho):\g^*\to\mc Z$$
restricts to the isomorphism $O\stackrel{\sim}{\to}\mc Z_{\rm rss}$ from the Theorem. I am grateful to R.~Bezrukavnikov for pointing out that such a formula is suggested by the proof of Theorem~\ref{thm.rat}.

\end{remsnn}

\section{Unique Factorisation}\label{s.ufd}
In this section we take an algebraic viewpoint and we will work directly with the definition $\mc Z=\Maxspec(Z)$, so that $k[\mc Z]=Z$.
The following lemma is well-known. Recall that an element of a module for a group is called a {\it semiinvariant} if the subspace that it spans is stable under the group action.
\begin{lemgl}\label{lem.invUFD}
Let $G'$ be a connected linear algebraic group acting by automorphisms on a commutative $k$-algebra $A$ such that the representation of $G'$ on $A$ is rational. Assume that $A$ is a unique factorisation domain and that every semiinvariant of $G'$ in $A$ is an invariant. Then $A^{G'}$ is a unique factorisation domain and its irreducible elements are the $G'$-invariant irreducible elements of $A$.
\end{lemgl}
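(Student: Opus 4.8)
The plan is to prove the classical statement that the invariant ring of a UFD under a connected group, all of whose semiinvariants are invariants, is again a UFD.

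\medskip

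\emph{Setup and strategy.} First I would recall the criterion that a Noetherian domain $A$ is a UFD if and only if every height one prime ideal of $A$ is principal (Nagata's criterion, or rather Kaplansky's), or — what is more convenient here — that $A$ is a UFD if and only if every nonzero nonunit factors into irreducibles and every irreducible element is prime. Since $A^{G'}$ inherits Noetherianity in the situations we care about (and in any case the divisor-theoretic argument below does not need it), I will work with the following approach: take a nonzero nonunit $a\in A^{G'}$, factor it in the ambient UFD $A$ as $a=p_1\cdots p_n$ with the $p_i$ irreducible in $A$, and show that this factorisation can be rearranged into a factorisation \emph{inside} $A^{G'}$ into $G'$-invariant irreducibles, and that these are unique up to units and order. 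The main tool is that $G'$ permutes the prime factors of any invariant element and, being connected, must fix each associated height one prime.

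\medskip

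\emph{Key steps.} (1) Let $a\in A^{G'}\setminus\{0\}$ be a nonunit and write $a=p_1\cdots p_n$ in $A$. For $g\in G'$ we get $a=g\cdot a=(g\cdot p_1)\cdots(g\cdot p_n)$, so by uniqueness of factorisation in $A$ the group $G'$ permutes the principal prime ideals $(p_1),\dots,(p_n)$; since $G'$ is connected and the set of these ideals is finite, $G'$ fixes each $(p_i)$. Hence for each $i$ and each $g$ we have $g\cdot p_i=\chi_i(g)\,p_i$ for some $\chi_i(g)\in k^\times$, i.e. $p_i$ is a semiinvariant. (2) By hypothesis every semiinvariant of $G'$ in $A$ is an invariant, so $\chi_i$ is trivial and $p_i\in A^{G'}$ for all $i$. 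Thus $a=p_1\cdots p_n$ is already a factorisation of $a$ into elements of $A^{G'}$, each of which is irreducible \emph{in $A$}. (3) An element of $A^{G'}$ that is irreducible in $A$ is a fortiori irreducible in $A^{G'}$ (a factorisation in the subring is a factorisation in $A$, and one must check that the resulting factor which is a unit of $A$ lies in $A^{G'}$ hence is a unit of $A^{G'}$ — here one uses that a unit of $A$ lying in $A^{G'}$ is a unit of $A^{G'}$, which follows because its inverse in $A$ is again an invariant by applying $g$ to $uu^{-1}=1$). So $A^{G'}$ has factorisations into irreducibles. (4) For uniqueness: suppose $\pi\in A^{G'}$ is irreducible in $A^{G'}$; I claim $\pi$ is prime in $A^{G'}$. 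Factor $\pi$ in $A$; by steps (1)–(2) all its $A$-irreducible factors lie in $A^{G'}$, and since $\pi$ is irreducible in $A^{G'}$ there can be only one such factor (up to an invertible-in-$A^{G'}$ element), so $\pi$ is actually irreducible, hence prime, in $A$. Then $(\pi)A\cap A^{G'}=(\pi)A^{G'}$: indeed if $\pi\mid bc$ in $A^{G'}$ then $\pi\mid b$ or $\pi\mid c$ in $A$, say $b=\pi b'$ with $b'\in A$, and averaging/using invariance of $b,\pi$ together with the fact that $\pi$ is a nonzerodivisor forces $b'\in A^{G'}$. Hence $\pi$ is prime in $A^{G'}$, which gives uniqueness of factorisation, so $A^{G'}$ is a UFD; and by construction its irreducible elements are exactly the $G'$-invariant irreducible elements of $A$.

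\medskip

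\emph{Main obstacle.} The genuinely substantive point is step (1): deducing from the rationality of the $G'$-action and the connectedness of $G'$ that $G'$ fixes each height one prime $(p_i)$ individually, and then that $p_i$ transforms by a character — i.e. is a semiinvariant in the sense defined just before the lemma. One must be a little careful that the stabiliser of $(p_i)$ is closed of finite index, hence everything, and that the resulting function $g\mapsto\chi_i(g)$ with $g\cdot p_i=\chi_i(g)p_i$ is a rational character of $G'$ (this uses the rationality of the representation on $A$ and the fact that $k\,p_i$ is a $G'$-submodule). Everything after that is the standard divisor-theoretic bookkeeping; the only other thing to keep straight is the distinction between ``irreducible in $A$'' and ``irreducible in $A^{G'}$'' and the elementary fact that units of $A$ that happen to be invariant are units of $A^{G'}$.
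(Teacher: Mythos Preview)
The paper does not prove this lemma; it is simply recorded as ``well-known''. Your argument is the standard one and is essentially correct, with one point that should be tightened.

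In step~(1) you move from ``the ideal $(p_i)$ is $G'$-stable'' to ``$g\cdot p_i=\chi_i(g)\,p_i$ with $\chi_i(g)\in k^\times$''. Stability of the ideal gives only $g\cdot p_i=u(g)\,p_i$ with $u(g)\in A^\times$; in your closing paragraph you then justify $u(g)\in k^\times$ by invoking ``the fact that $k\,p_i$ is a $G'$-submodule'', but that is precisely what is to be shown, so as written the justification is circular. In the paper's only application one has $A=S(\g)$, hence $A^\times=k^\times$, and the passage is immediate. For general $A$ one may argue as follows: by rationality both $g\mapsto u(g)$ and $g\mapsto u(g)^{-1}=g\cdot u(g^{-1})$ are morphisms from $G'$ into finite-dimensional $k$-subspaces of $A$, so $u$ defines a unit of $k[G']\otimes_kR$ for a suitable finitely generated $k$-subdomain $R\subseteq A$; Rosenlicht's unit theorem gives $(k[G']\otimes_kR)^\times=k^\times\cdot k[G']^\times\cdot R^\times$, and evaluation at $e\in G'$ forces the $R^\times$-factor into $k^\times$, whence $u(g)\in k^\times$ for all $g$. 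With this adjustment your steps (2)--(4) go through verbatim and yield both that $A^{G'}$ is a UFD and that its irreducibles are exactly the $G'$-invariant irreducibles of $A$.
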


\begin{cornn}
$S(\g)^G$ is a unique factorisation domain and its irreducible elements are the $G$-invariant irreducible elements of $S(\g)$.
\end{cornn}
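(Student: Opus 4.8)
The plan is to apply Lemma~\ref{lem.invUFD} with $G' = G$ and $A = S(\g)$. The algebra $S(\g) = k[\g^*]$ is a polynomial algebra, hence a unique factorisation domain, and the representation of $G$ on $S(\g)$ is rational since it is the symmetric algebra on the finite-dimensional rational $G$-module $\g$. So the only hypothesis that requires verification is that every semiinvariant of $G$ in $S(\g)$ is in fact an invariant. Granting this, the lemma yields at once that $S(\g)^G$ is a unique factorisation domain with irreducible elements precisely the $G$-invariant irreducible elements of $S(\g)$, which is the assertion.

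The point to check is therefore: if $f \in S(\g)$ spans a $G$-stable line, then $G$ acts trivially on that line. Writing $g \cdot f = \kappa(g) f$ defines a character $\kappa: G \to k^\times$, i.e.\ an element of the character group $X(G)$. Since $\g$ is the adjoint module, the weights of $T$ on $\g$ are the roots $\Sigma$ together with $0$ (with multiplicity $\operatorname{rank} G$), so these weights span the same subgroup of $X(T)$ as $\Sigma$ does, namely the root lattice. Hence every weight of $T$ occurring in $S(\g)$ lies in the root lattice; in particular the weight $\kappa|_T$ of the semiinvariant $f$ lies in the root lattice. On the other hand, a character of $G$ restricts on $T$ to an element of $X(T)$ that is $W$-invariant and, because characters of $G$ kill the derived group $DG$, it must in fact be orthogonal to all coroots, i.e.\ it lies in the subgroup of $X(T)$ annihilating the coroot lattice. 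The intersection of the root lattice with this subgroup is trivial (the root lattice has finite index in $X(T\cap DG)^{*\perp}$-complement and pairs non-degenerately with the coroots within $DG$). Therefore $\kappa|_T$ is trivial, and since $G$ is connected and generated by $T$ together with the root subgroups on which $\kappa$ is automatically trivial (a homomorphism to $k^\times$ from the additive group is trivial), $\kappa$ itself is trivial. Thus $f$ is a genuine invariant.

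I expect the only mildly delicate point to be the lattice computation showing that a $G$-character whose $T$-restriction lies in the root lattice must be trivial; this is where hypothesis (H1), that $DG$ is simply connected, could be invoked if one wants the cleanest statement, though the weaker fact that $X(G)$ injects into $X(T)/\mathbb{Z}\Sigma$ composed with the observation that all $S(\g)$-weights lie in $\mathbb{Z}\Sigma$ already forces triviality regardless. An alternative, essentially equivalent, route avoids characters of $G$ altogether: decompose $f$ into $T$-weight components; a $G$-semiinvariant of weight $\kappa$ has all its $T$-weight components of weight $\kappa$, but $S(\g)$ is a direct sum of $T$-weight spaces whose weights lie in $\mathbb{Z}\Sigma$, and among these the only one that is $W$-fixed is $0$ (again since $W$ acts on $\mathbb{Z}\Sigma\otimes\mathbb{Q}$ with no nonzero fixed vectors), so $\kappa = 0$ and $f \in S(\g)^T$; combined with $f$ spanning a $G$-line this gives $f \in S(\g)^G$. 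Either way the corollary follows directly from Lemma~\ref{lem.invUFD}.
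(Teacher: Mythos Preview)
Your proposal is correct. Both you and the paper reduce to Lemma~\ref{lem.invUFD} and identify the only nontrivial hypothesis as ``every $G$-semiinvariant in $S(\g)$ is an invariant''. The paper does not argue this in place: it simply cites \cite[Lem.~2 and proof of Prop.~3(2)]{T2}, remarking that the underlying idea comes from the treatment of semisimple elements of $\g^*$ in \cite[Sect.~3]{KW}. That suggests a more geometric argument (via the structure of semisimple orbits) than the one you give.

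Your route is a self-contained character/lattice argument: the $T$-weight of a $G$-semiinvariant lies in $\mb Z\Sigma$ (since all $T$-weights of $S(\g)$ do) and is $W$-fixed (being the restriction to $T$ of a character of $G$); the only $W$-fixed element of $\mb Z\Sigma$ is $0$, so the character is trivial on $T$ and hence on $G$. This is clean and avoids external references. Your first pass at the lattice step (the parenthetical about ``$X(T\cap DG)^{*\perp}$-complement'') is garbled, but you correctly flag it and your alternative via $W$-invariants in $\mb Z\Sigma$ is complete. Either formulation works, and neither actually needs (H1): the relevant input is just that the simple roots are linearly independent in $X(T)\otimes\mb Q$, which holds for any connected reductive $G$.
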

\begin{proof}
By the same arguments as in \cite[Lem.~2]{T2} and the proof of \cite[Prop.~3(2)]{T2} we get that every $G$-semiinvariant in $S(\g)$ is an invariant. The idea of these arguments comes, of course, from the treatment of semisimple elements in $\g^*$ in \cite[Sect.~3]{KW}.
\end{proof}

\begin{thmgl}\label{thm.ufd}
The centre $Z$ is a unique factorisation domain.
\end{thmgl}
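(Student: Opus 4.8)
The strategy is to reduce the statement about $Z$ to the unique factorisation property of a suitable localisation, and then to propagate back using that $Z$ is a normal Noetherian domain whose units and ``missing'' height-one primes can be controlled. Concretely, recall from Theorem~\ref{thm.rat} that $\mc Z_{\rm rss}$, a special open subset of $\mc Z$, is $G$-equivariantly isomorphic (via the star action) to the principal open set $O=\{\chi\in\g^*\,|\,F(\chi)\ne0\}$ of $\g^*$, where $F=\Phi^{-1}\big(\prod_{\alpha\in\Sigma}(h_\alpha^p-h_\alpha)\big)$. Thus $k[\mc Z_{\rm rss}]\cong S(\g)[F^{-1}]$. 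By the Corollary above, $S(\g)^G$ is a UFD, hence so is $S(\g)=k[\g^*]$ (it is a polynomial ring!); more to the point $S(\g)[F^{-1}]$ is a localisation of a UFD and therefore again a UFD. So $k[\mc Z_{\rm rss}]$ is a UFD, and the problem becomes: transfer unique factorisation across the open immersion $\mc Z_{\rm rss}\hookrightarrow\mc Z$, i.e. control the complement $\mc Z\setminus\mc Z_{\rm rss}$ and the unit group.

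First I would identify the function $f\in Z$ whose non-vanishing locus is $\mc Z_{\rm rss}$. Since $\mc Z_{\rm rss}=pr_1^{-1}(\g^{*(1)}_{\rm rss})$ and $\g^{*(1)}_{\rm rss}$ is the non-vanishing locus in $\g^{*(1)}$ of the discriminant-type $G$-invariant corresponding to $\prod_\alpha h_\alpha$ in $S(\t)^W$, pulling back through $\eta:S(\g)^{(1)}\xrightarrow{\sim}Z_p\subseteq Z$ and using $\eta\circ\Phi=\Psi\circ\eta$ one obtains an explicit central element; in the notation of the theorem this is essentially $\eta(F)\in Z_p$, an element whose image in $S(\g)^\g$ under $\gr$ is $F^p$ (up to the relation between $u_i$ and $s_i$). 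In particular $\mc Z_{\rm rss}=\mc Z_f$ for a single $f\in Z$, so $Z[f^{-1}]=k[\mc Z_{\rm rss}]$ is a UFD.

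Next, the standard mechanism (Nagata's lemma): if $R$ is a Noetherian normal domain, $f\in R$ is such that $R[f^{-1}]$ is a UFD, and every irreducible factor of $f$ generates a prime ideal, then $R$ itself is a UFD. Here $Z$ is normal (stated in the Introduction) and Noetherian (it is affine, being module-finite over the polynomial ring $Z_p$). So the whole proof comes down to showing that $f$ factors into primes in $Z$ — equivalently that every height-one prime of $Z$ containing $f$ is principal. This is where the filtration machinery of Subsection~\ref{ss.filtration} enters: by the Corollary there, $\gr(Z)\cong S(\g)^\g$ as graded algebras, and an element of $Z$ is prime as soon as its principal symbol is prime in $S(\g)^\g$; and $S(\g)^\g$, being a free module over the polynomial ring $S(\g)^p$ with $G$-invariant basis, has its own transfer-of-UFD structure — indeed one shows $\gr(f)$ is, up to units, a product of the $G$-invariant irreducibles $\Phi^{-1}(h_\alpha^p-h_\alpha)$ (appropriately raised to the $p$-th power and combined), each of which is irreducible in $S(\g)^G$ hence in $S(\g)^\g$ by a further application of Lemma~\ref{lem.invUFD}-type reasoning to the $p$-centre. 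Lifting these irreducible symbols back through $\gr$ yields prime factors of $f$ in $Z$.

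The main obstacle I anticipate is precisely the control of the factorisation of $f$ (equivalently of $\gr(f)$) and the verification that the relevant primes of $\gr(Z)=S(\g)^\g$ lift to \emph{principal} primes of $Z$: one must check that each linear-factor-type invariant $h_\alpha^p - h_\alpha$ pulled back to $S(\g)^\g$ really is irreducible there (not merely in $S(\g)^G$), which requires knowing that $S(\g)^\g\cong S(\g)^p\otimes_{(S(\g)^p)^G}S(\g)^G$ from Veldkamp's theorem together with the fact that $S(\g)^p$ is a polynomial ring and the extension $(S(\g)^p)^G\subseteq S(\g)^G$ is ``nice'' enough that irreducibles stay irreducible after the base change; and then that $\deg$ is additive on the relevant products so that $\gr$ respects the factorisation. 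A secondary, more bookkeeping-type point is to confirm that $Z^\times=k^\times$ (so that there are no spurious units obstructing Nagata's lemma), which follows from $\gr(Z)=S(\g)^\g$ being a non-negatively graded domain with $\gr(Z)_0=k$. Once these are in place, Nagata's lemma closes the argument.
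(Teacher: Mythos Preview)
Your overall strategy---Nagata's lemma applied to the localisation $Z[f^{-1}]\cong k[\mc Z_{\rm rss}]$, then lifting primality from $\gr(Z)\cong S(\g)^\g$---is exactly the paper's approach. Two points need correcting, however, and the second is a genuine gap.

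First, a bookkeeping correction: the element cutting out $\mc Z_{\rm rss}$ is $\eta(F_0)$ with $F_0=\Phi^{-1}\big(\prod_{\alpha}h_\alpha\big)$, not $\eta(F)$; one then computes $\eta(F_0)=\Psi^{-1}(\eta(H_0))=\Psi^{-1}(H)=\Psi^{-1}(\gamma^{-1}(H))$ via \eqref{eqn.eta}, where $H=\prod_\alpha(h_\alpha^p-h_\alpha)$.

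Second, and more substantively, your proposed prime factors ``$\Phi^{-1}(h_\alpha^p-h_\alpha)$'' are not well-defined: $h_\alpha^p-h_\alpha$ is \emph{not} $W$-invariant for a single root $\alpha$, so $\Phi^{-1}$ cannot be applied to it. The correct factorisation of $H$ in $S(\t)^W$ groups roots by $W$-orbit and by residue: for each $W$-orbit $\Gamma\subseteq\Sigma$ and each $a\in\mb F_p$ one sets $H_a^\Gamma=\prod_{\alpha\in\Gamma}(h_\alpha-a)$ (or the product over positive roots in $\Gamma$ when $p=2$), and these are the irreducibles of $S(\t)^W$ dividing $H$. The prime factors of $\eta(F_0)$ in $Z$ are then the $\Psi^{-1}(\gamma^{-1}(H_a^\Gamma))$; by \eqref{eqn.gr} their principal symbols are $\Phi^{-1}(H_0^\Gamma)$, which one shows are irreducible in $S(\g)^G$, hence in $S(\g)$ by the Corollary to Lemma~\ref{lem.invUFD}, hence in the UFD $S(\g)^\g$. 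This $W$-orbit decomposition is the missing combinatorial ingredient; once you insert it, your argument coincides with the paper's.
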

\begin{proof}
Put $H=\prod_{\alpha\in\Sigma}(h_\alpha^p-h_\alpha)$, $F=\Phi^{-1}(H)$ and for $a\in{\mb F}_p$ put $H_a=\prod_{\alpha\in\Sigma}(h_\alpha-a)$. Then $H=\prod_{a\in{\mb F}_p}H_a$. Put $F_0=\Phi^{-1}(H_0)$. Then  $\chi\in\g^*$ is regular semisimple if and only if $F_0(\chi)\ne0$. So $\g^{*(1)}_{\rm rss}$ is the open subset of $\g^{*(1)}$ defined by $F_0^p$ ($p$-th power as a function on $\g^*$). Now the isomorphism from Subsection~\ref{ss.veldkamp} has
$$x\ot y\mapsto\eta(x^{1/p})\ot\Psi^{-1}(y):
k[\g^{*(1)}]\ot_{k[\t^{(1)*}]^W}k[\t^*]^{W\dt}\stackrel{\sim}{\to}Z_p\otimes_{Z_p^G}U^G$$
as a comorphism, so $\mc Z_{\rm rss}$ is the open subset of $\mc Z$ defined by $\eta(F_0)$. Since, by Theorem~\ref{thm.rat}, $Z[\eta(F_0)^{-1}]\cong k[\mc Z_{\rm rss}]\cong k[\g^*][F^{-1}]$ is a UFD, it suffices by Nagata's Lemma (see e.g. \cite[Lem~19.20]{Eis} or \cite[Thm.~3.7(i)]{Cohn}) to show that $\eta(F_0)$ is a product of prime elements in $Z$.

Let $\Sigma/W$ denote the set of $W$-orbits in $\Sigma$. For $\Gamma\in\Sigma/W$ and $a\in{\mb F}_p$ put $$H_a^\Gamma=
\begin{cases}
\prod_{\alpha\in\Gamma}(h_\alpha-a)\text{\quad if\ }p\ne2\\
\prod_{\alpha\in\Gamma,\alpha>0}(h_\alpha-a)\text{\quad if\ }p=2.
\end{cases}
$$
Then
$$H=
\begin{cases}
\prod_{a\in{\mb F}_p,\Gamma\in\Sigma/W}H_a^\Gamma\text{\quad if\ }p\ne2\\
\prod_{a\in{\mb F}_p,\Gamma\in\Sigma/W}(H_a^\Gamma)^2\text{\quad if\ }p=2.
\end{cases}
$$
Using \eqref{eqn.eta} we get
$$\eta(F_0)=\eta(\Phi^{-1}(H_0))=\Psi^{-1}(\eta(H_0))=\Psi^{-1}(H)=\Psi^{-1}(\gamma^{-1}(H)).$$
So it is enough to show that the elements $\Psi^{-1}(\gamma^{-1}(H_a^\Gamma))$ are prime in $Z$.
Now recall from Subsection~\ref{ss.filtration} that we have a canonical isomorphism $\gr(Z)\cong S(\g)^\g$. So it is enough to show that the elements $\gr(\Psi^{-1}(\gamma^{-1}(H_a^\Gamma)))$ are prime in $S(\g)^\g$. By \eqref{eqn.gr} we have
$$\gr(H_a^\Gamma)=\Phi(\gr(\Psi^{-1}(\gamma^{-1}(H_a^\Gamma)))).$$
So $$\gr(\Psi^{-1}(\gamma^{-1}(H_a^\Gamma)))=\Phi^{-1}(\gr(H_a^\Gamma))=\Phi^{-1}(H_0^\Gamma).$$
From the simply connectedness of $DG$ we deduce that $h_\alpha$ and $h_\beta$ are linearly independent for all $\alpha,\beta\in\Sigma$ with $\alpha\ne\pm\beta$. From this one easily deduces that the elements $H_0^\Gamma$ are irreducible in $S(\t)^W$.
So the elements $\Phi^{-1}(H_0^\Gamma)$ are irreducible in $S(\g)^G$. But then they are irreducible in $S(\g)$ by the corollary to Lemma~\ref{lem.invUFD}. So they are also irreducible in $S(\g)^\g$ which is a UFD by \cite[Prop.~3(2)]{T2}.
\end{proof}

Combining Theorem~\ref{thm.ufd} with Braun's work \cite{Braun2} we obtain assertion (ii) of the following corollary. For a definition of Calabi-Yau algebra, see \cite[Sect.~1]{Braun2}. The definition there is based on the definitions and results in \cite[Sect.~3]{IR}, but the symmetry property is required globally rather than locally (see also J.~Miyachi's example in \cite{IR} after Theorem~3.3).

\begin{cornn}\
\begin{enumerate}[{\rm(i)}]
\item Every height one prime of $U$ is generated by a single central element.
\item $U$ is a Calabi-Yau algebra over any polynomial subring of $Z$ over which $Z$ (and therefore $U$) is module finite.
\end{enumerate}
\end{cornn}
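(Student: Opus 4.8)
The plan is to deduce (i) from Theorem~\ref{thm.ufd} together with the fact that $U$ is Azumaya over $Z$ in codimension one, and then to obtain (ii) by feeding (i) into Braun's work \cite{Braun2}.

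\emph{Part (i).} Let $P$ be a height one prime of $U$. Since $U$ is a domain module finite over $Z$, the ideal $\mathfrak p=P\cap Z$ is a nonzero prime of $Z$; as $Z$ is a unique factorisation domain by Theorem~\ref{thm.ufd}, $\mathfrak p$ contains a prime element $z\in Z$, so $zU\subseteq P$. I claim that $zU$ is itself a height one prime of $U$; granting this, $zU\subseteq P$ with both primes of height one forces $P=zU$, so $P$ is generated by the central element $z$, as wanted. To prove the claim, note first that $zZ$ is a height one prime of $Z$ and that $Z$, being a UFD, is normal, so $Z_{zZ}$ is a discrete valuation ring. The Azumaya locus of $U$ over $Z$ --- which is the set $\mc O$ of the Introduction --- coincides with the smooth locus of $\mc Z$ by Brown and Goodearl \cite{BG}, and $\mc Z$ is normal, so the non-Azumaya locus has codimension $\geq 2$; hence $zZ$ lies in the Azumaya locus and $U_{zZ}:=U\otimes_Z Z_{zZ}$ is an Azumaya algebra over $Z_{zZ}$. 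Next, $U$ is a free module over the polynomial algebra $Z_p$ (a standard consequence of $\gr U\cong S(\g)$), hence a Cohen--Macaulay module over $Z_p$, hence a Cohen--Macaulay module over $Z$ since $Z$ is finite over $Z_p$. Therefore $U/zU$ is a Cohen--Macaulay module over $Z/zZ$ of dimension $\dim Z/zZ$; in particular it is torsion free over the domain $Z/zZ$, so it embeds into $(U/zU)\otimes_{Z/zZ}{\rm Frac}(Z/zZ)=U_{zZ}/zU_{zZ}$, which is an Azumaya algebra over the field ${\rm Frac}(Z/zZ)$ and hence simple. Thus $0$ is a prime ideal of $U/zU$, i.e.\ $zU$ is a prime ideal of $U$; and $\operatorname{ht}(zU)=1$, because any prime strictly between $0$ and $zU$ would, by incomparability for the finite extension $Z\subseteq U$, contract to $0$ in $Z$, which is impossible for a nonzero prime of the domain $U$. (One could also read (i) off from the general theory of maximal orders over unique factorisation domains.)

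\emph{Part (ii).} Recall that $U=U(\g)$ is module finite over $Z$, that $\gr U\cong S(\g)$ is a polynomial algebra --- so $U$ is Auslander-regular of global dimension $\dim\g$ and Cohen--Macaulay with respect to Gelfand--Kirillov dimension --- and that $U$ is a Cohen--Macaulay module over $Z$, as noted above. By (i), every height one prime of $U$ is generated by a central element. Under these hypotheses Braun's results in \cite{Braun2} yield that $U$ is a Calabi--Yau algebra over any polynomial subalgebra $A\subseteq Z$ over which $Z$, and therefore $U$, is module finite (equivalently, over any Noether normalisation of $Z$). This is assertion~(ii).

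I expect the main obstacle to be the proof that $zU$ is prime of height one in part (i): that is exactly where one uses both the Cohen--Macaulay property of $U$ as a $Z$-module and the identification, via Brown--Goodearl and the normality of $\mc Z$, of the Azumaya locus with the complement of a codimension-$\geq 2$ closed set. The identity $P=zU$ then follows formally, and part (ii), beyond checking that $U(\g)$ meets the homological hypotheses, is essentially a citation to \cite{Braun2}.
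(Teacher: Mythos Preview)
Your proof is correct and follows the same overall strategy as the paper: for (i), combine Theorem~\ref{thm.ufd} with the fact that the Azumaya locus of $U$ over $Z$ has complement of codimension $\ge 2$ (Brown--Goodearl plus normality of $\mc Z$); for (ii), feed (i) into \cite[Thm.~2.16]{Braun2}.

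The only genuine difference is in how you finish (i). The paper first observes, via \cite[Thm.~13.8.14]{McCR}, that $q=Q\cap Z$ already has height one, so $q=zZ$; then, since $U_q$ is Azumaya, $Q_q=U_qq_q$, and one descends to $Q=Uq$ by \cite[Lemma~6]{Braun1}. You instead prove directly that $zU$ is a height-one prime, using that $U$ is Cohen--Macaulay over $Z$ to get $U/zU$ torsion-free over $Z/zZ$, hence embedded in the simple ring $U_{zZ}/zU_{zZ}$; then $zU\subseteq P$ forces equality. Your route is more self-contained (no appeal to \cite[Lemma~6]{Braun1}) at the cost of invoking the Cohen--Macaulay property; the paper's route is shorter but relies on that external lemma. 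One small point worth making explicit in your write-up: the incomparability step uses $zU\cap Z=zZ$, which follows since $U$ is a domain (if $a=zu\in Z$ then $u$ commutes with everything, so $u\in Z$).
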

\begin{proof}
(i).\ The arguments are standard (see \cite[Rem.~(4) p136]{BrHa}); we give them for convenience of the reader. Let $Q$ be a height one prime of $U$. Put $q=Q\cap U$. Then $q$ is a height one prime of $Z$, see e.g. \cite[Thm.~13.8.14]{McCR}. Since $Z$ is a UFD $q$ is principal, so all we need to show is that $Q$ is generated by $q$. Since $U_q$ (the elements of $Z\sm q$ made invertible) is Azumaya by \cite[proof of Prop.~4.8]{BG}, we have that $Q_q=U_qq_q$, see e.g. \cite[Prop.~13.7.9]{McCR}. But then $Q=Uq$ by \cite[Lemma~6]{Braun1}. As pointed out in \cite[p136]{BrHa} one can also show this under the weaker assumption that $Z$ is locally factorial. The point is that one can apply \cite[Lemma~6]{Braun1} also to localisations $U_m$ at maximal ideals $m$ of $Z$ containing $q$ (the case $q\nsubseteq m$ is trivial).\\
(ii).\ This follows from (i) and \cite[Thm~2.16]{Braun2}.\\
\end{proof}

\begin{remsnn}
1.\ The isomorphism $\Ext^i_U(V,W)\cong \Ext^{n-i}_U(W,V)^*$, $n=\dim(\g)$, $V$ and $W$ finite dimensional $U$-modules, from \cite[Prop~2.29]{Braun2} is in our case an easy consequence of Poincare duality for Lie algebra cohomology.\\
2.\ From the proof of \cite[Prop.~1.2]{FrPa2} and some standard facts about Calabi-Yau algebras
it follows immediately that $U$ is Calabi-Yau over $Z_p$. K.~A.~Brown informed me that it seems plausible that one can deduce from this by general arguments that $U$ is Calabi-Yau over any polynomial subring of $Z$ over which $Z$ is module finite.\\
3.\ The method used in \cite{PrT} essentially amounted to showing that the field extension $k(\g)^G\subseteq k(\g)$ is purely transcendental. It would imply that the invariant algebra $k[\g]^\g$ has a rational field of fractions. Recently Colliot-Thelene, Kunyavskii, Popov and Reichstein \cite{CKPR} showed that in characteristic zero the field extension $k(\g)^G\subseteq k(\g)$ can only be purely transcendental in types $A$, $C$, $G_2$ (and also that it is indeed the case in type $C$). By modifying their arguments Premet showed in \cite{Prem2} that also in characteristic $p>>0$ the field extension $k(\g)^G\subseteq k(\g)$ can only be purely transcendental in types $A$, $C$, $G_2$. Furthermore, he deduced from this and from Theorem~\ref{thm.rat} in this paper, using reduction mod $p$ arguments, that the Gelfand-Kirillov conjecture cannot hold for simple complex Lie algebras that are not of type $A$, $C$ or $G_2$.

Similarly, the invariant algebra $k[G]^\g$ would have a rational field of fractions if the field extension $k(G)^G\subseteq k(G)$ were purely transcendental. We note that the method used to prove the rationality of ${\rm Frac}(Z)$ in this paper does not apply in these cases, since the field extensions $k(\g)^p\subseteq k(\g)^\g$ and $k(G)^p\subseteq k(G)^\g$ are not separable (in fact they are purely inseparable).

The algebras $k[\g]^\g$ and $k[G]^\g$ are known to be unique factorisation domains, see \cite{T2} and \cite[Sect.~2]{T3} (The case $k[\g]^\g$ is very easy and essentially due to S.\ Skryabin).\\
4.\ It is not clear whether the results of this paper can be extended to ``simply connected" quantized enveloping algebras at a root of unity as defined by De Concini, Kac and Procesi. The main problem is that the quantum coadjoint action of $\tilde G$ on $\Omega=\Maxspec(Z_0)$ is not a morphic action of an algebraic group on the algebraic variety $\Omega$, but an action of an infinite dimensional Lie group on the complex analytic variety $\Omega$. It is known that the centres of these quantized enveloping algebras are always locally factorial, see \cite[Thm.~24]{BrHa}. For results on rationality and factoriality for the centre of quantum $\spl_n$ at a root of unity see \cite{T1}.
\end{remsnn}

\noindent{\it Acknowledgement}. I would like to thank S.~Donkin for mentioning the description of the regular semisimple elements in \cite[Sect.~2]{D} to me. Furthermore, I would like to thank A.~Braun, K.~A.~Brown, A.~Premet and R.~Bezrukavnikov for useful comments.

\bigskip

{\sc\noindent Department of Mathematics,
University of York,
Heslington, York, UK, YO10~5DD.
{\it E-mail address: }{\tt rht502@york.ac.uk}
}

\end{document}